\documentclass[twoside,a4paper]{article}

\usepackage{theorem}
\usepackage{amssymb}
\usepackage{graphicx}
\usepackage{amsmath}
\usepackage{subfigure}
\usepackage{enumerate}

\evensidemargin 0in
\oddsidemargin  0in
\setlength\headsep{20pt}
\setlength{\headheight}{30pt}
\setlength{\footskip}{30pt}
\setlength{\textwidth}{6.3 truein}
\setlength{\textheight}{9.5 truein}
\setlength{\topmargin}{-1  truein}
\setlength{\oddsidemargin}{0in}
\setlength{\evensidemargin}{0in}
\flushbottom \sloppy

\newtheorem{theorem}{\sc Theorem}[section]

\newtheorem{definition}{\sc Definition}[section]
\newtheorem{example}{\sc Example}[section]

\newtheorem{observation}{\sc Observation}[section]

\newcommand{\qed}{\hskip 10pt $\Box$}

\newenvironment{proof}{\par {\sc Proof.\hskip 5pt}}{\hfill \qed \par}

\newcommand{\R}{{\mathbb R}}
\newcommand{\N}{{\mathbb N}}

\newcommand{\Z}{{\mathbb Z}}
\newcommand{\conv}{{\rm conv}}

\newcommand{\intt}{\mathrm{int}}

\newcommand{\spann}{\mathrm{span}}


\title{Maximal integral simplices with no interior integer points}

\author{Kent Andersen
    \and Christian Wagner
    \and Robert Weismantel
}


\begin{document}

\maketitle

\begin{abstract}
In this paper, we consider integral maximal lattice-free simplices.
Such simplices have integer vertices and contain integer points in the
relative interior of each of their facets, but no integer point is 
allowed in the full interior.
In dimension three, we show that any integral maximal lattice-free
simplex is equivalent to one of seven simplices up to unimodular
transformation.
For higher dimensions, we demonstrate that the set of integral maximal
lattice-free simplices with vertices lying on the coordinate axes is
finite. This gives rise to a conjecture that the total number of
integral maximal lattice-free simplices is finite for any dimension.

\end{abstract}


\section{Introduction}

In dimension $d \in \N$, a simplex is defined to be the convex hull of
$d+1$ affinely independent points. It is called integral if its
vertices have integer coordinates. Integral simplices have been
studied in several contexts. In particular, the literature is rich in
investigations of integral simplices that contain no other lattice
points besides the vertices - neither on the boundary nor in the
interior (see e.g. Reznick \cite{reznick}, Scarf \cite{scarf}, Seb\"{o}
\cite{sebo}).
This notion of lattice-freeness is an interesting concept which has
proven to be valuable for some problems in integer programming and
combinatorics. Most notably, this notion is used for primal integer
programming and the study of neighbors of the origin (see Scarf
\cite{scarf}).

In this paper, we consider a different notion of
lattice-freeness. The application we have in mind is to use integral
simplices as a tool to generate cutting planes for (mixed) integer
linear problems (see e.g. \cite{anlowe2, anlowewo, balasint, coma,
  dewo,zambelli}). For this purpose, we employ the notion of 
lattice-freeness introduced by Lov\'{a}sz \cite{lovasz}.

\begin{definition}
A simplex $S \subseteq \R^d$ is called lattice-free if $\intt(S)
\cap \Z^d = \emptyset$.
\end{definition}

To obtain deep cutting planes, we look for integral lattice-free
simplices which are maximal with respect to inclusion and call them
\emph{integral maximal lattice-free simplices}. A well-known result
of Lov\'{a}sz \cite{lovasz} is that for such simplices each facet
contains an integer point in its relative interior.

The application to cutting plane generation requires however to have
an explicit list of integral maximal lattice-free simplices available.
The partial knowledge about structural properties of such bodies is
definitely not enough.

In dimension two, it can easily be verified that any integral maximal
lattice-free simplex can be unimodularly transformed to
$\conv\left((0,0)^T,(2,0)^T,(0,2)^T\right)$. However, to the best of
our knowledge, a characterization of integral maximal lattice-free
simplices in higher dimensions is not known. Moreover, it is not known
if their number is finite.

We note that a recent paper of
Treutlein \cite{treutlein} shows finiteness of integral maximal
lattice-free simplices in dimension three.
In this paper, we extend on this result: we completely characterize
integral maximal
lattice-free simplices in dimension three and show that - up to
unimodular transformation - only seven different simplices exist.
Furthermore, for a special class of integral maximal lattice-free
simplices, namely simplices with vertices on the coordinate axes,
we argue that their number is finite for any dimension
$d \in \N$.
This gives rise to the conjecture that the total number of integral
maximal lattice-free simplices is finite, in general.

Section \ref{3d} is dedicated to the analysis of the three
dimensional case. Extensions are considered in Section
\ref{ext}.
Sections \ref{case.dist.>=2} and \ref{case.dist.=1} contain details of
our proof technique.



\section{Simplices in dimension three} \label{3d}

Let $S \subseteq \R^3$ be an integral maximal lattice-free simplex.
We assume that $S=\conv(0,v^1,v^2,v^3)$, where $v^1,v^2$, and $v^3$
are integer vectors. By computing the Hermite normal form of the
matrix $(v^1,v^2,v^3)$, it can be assumed that $v^1=(a,0,0)^T$,
$v^2=(b,c,0)^T$, and $v^3=(d,e,f)^T$ such that all coefficients
$a,b,c,d,e$, and $f$ are integer and, in addition, $a,c,f > 0$,
$0 \leq b < c$, $0 \leq d < f$, and $0 \leq e < f$ hold (see
Schrijver \cite{Schrijver}).
Furthermore, we can assume that $c \not = 1$ and $f \not = 1$ since
for $c = 1$ it follows $b = 0$ and thus the facet spanned by the three
points $0,v^1,v^2$ does not contain an interior integer point and
therefore $S$ is not maximal lattice-free. On the other hand, for
$f = 1$, $S$ is contained in the split $\{x \in \R^3 : 0 \leq x_3 \leq
1\}$ which is a contradiction to its maximality. Hence, we 
have $c,f \geq 2$. In the remainder of this paper we work with the
following inequality representation of $S$:
\begin{equation} \label{simp.descr.}
  \begin{pmatrix} 0 & 0 & -1 \\ 0 & -f & e \\ -cf & bf & cd - be \\
    cf & f(a-b) & e(b-a) + c(a-d) \end{pmatrix}
  \begin{pmatrix} x_1 \\ x_2 \\ x_3 \end{pmatrix} \leq
  \begin{pmatrix} 0 \\ 0 \\ 0 \\ acf \end{pmatrix}.
\end{equation}
Our proof strategy is based on partitioning the set of potential
integral maximal lattice-free
simplices according to relations among the unknowns $a,b,c,d,e$
and $f$. For each of the subcases we then manage to compute upper
bounds on $a,c$, and $f$. Once this has been established, the integral
maximal lattice-free simplices can be computed by enumeration.
The enumeration provides a list of simplices which must then be checked
for unimodular equivalence.


\begin{definition} \label{unimod.trans}
Two sets $S,T \in \R^d$ are unimodularly equivalent if there exist a
unimodular matrix $M \in \Z^{d \times d}$ and a vector $v
\in \Z^d$ such that $T = MS + v$, where $MS := \{Ms \in \R^d : s
\in S\}$.
\end{definition}

For integral maximal lattice-free simplices $S=\conv(s^1,s^2,s^3,s^4)$
and $T=\conv(t^1,t^2,t^3,t^4)$ in $\R^3$ it follows that they are
unimodularly equivalent if there exist a matrix $M \in
\Z^{3 \times 3}$ with $|\det(M)| = 1$ and a vector $v \in \Z^3$ such
that $s^j = Mt^{\sigma(j)} + v$ for all $j = 1,2,3,4$, where $\sigma(j)$
is a permutation.
\bigskip

We distinguish our analysis into the two major cases $a \geq 2$ and
$a = 1$. For $a \geq 2$ there are two integer points which play a key
role in our subcase analysis. In the following let
\begin{equation*}
k := \left\lceil \frac{ac+b-a}{c} \right\rceil -1.
\end{equation*}
The distinctions in our subcase analysis for $a \geq 2$ are based on
the locations of the points $(1,1,1)$ and $(k,1,1)$ relative to $S$.
Geometrically, this can be interpreted as follows:
Firstly, we investigate simplices where the point $(1,1,1)$ either lies
on or violates the fourth facet of \eqref{simp.descr.}. Afterwards,
we consider the opposite case and divide, secondly, into simplices
where the point $(k,1,1)$ either lies on or violates the third facet
of \eqref{simp.descr.} and simplices where this is not the case.

Here is the structure of the case distinction for $a \geq 2$ with the
corresponding bounds on $a,c$, and $f$:

\begin{enumerate}[I)]
  \item $cf + f(a-b) + e(b-a) + c(a-d) \geq acf$ \\
    \hspace*{0.5cm} \big( means: $(1,1,1)$ either lies on or violates
    the fourth facet of \eqref{simp.descr.} \big)
  \begin{enumerate}[1)]
    \item $b \geq a$ $\qquad \qquad \Rightarrow$ no integral maximal
      lattice-free simplex possible
    \item $b < a$
    \begin{enumerate}[i)]
      \item $(a,c) = (2,2)$ $\qquad \qquad \Rightarrow (a,c,f) \leq
        (2,2,4)$
      \item $(a,c) \not = (2,2)$ $\qquad \qquad \Rightarrow (a,c,f)
        \leq (6,6,6)$
    \end{enumerate}
  \end{enumerate}
  \item $cf + f(a-b) + e(b-a) + c(a-d) < acf$ \\
    \hspace*{0.5cm} \big( means: $(1,1,1)$ strictly satisfies the
    fourth facet of \eqref{simp.descr.} \big)
    \begin{enumerate}[1)]
      \item $-cfk + bf + cd - be \geq 0$ \\
        \hspace*{0.5cm} \big( means: $(k,1,1)$ either lies on or
        violates the third facet of \eqref{simp.descr.} \big)
      \begin{enumerate}[i)]
        \item $(a,c) = (2,2)$ $\qquad \qquad \Rightarrow (a,c,f)
          \leq (2,2,8)$
        \item $(a,c) \not = (2,2)$ $\qquad \qquad \Rightarrow
          (a,c,f) \leq (3,18,6)$
      \end{enumerate}
      \item $-cfk + bf + cd - be < 0$ \\
        \hspace*{0.5cm} \big( means: $(k,1,1)$ strictly satisfies
        the third facet of \eqref{simp.descr.} \big)
      \begin{enumerate}[i)]
        \item $e > 0$ $\qquad \qquad \Rightarrow$ no integral
          maximal lattice-free simplex possible
        \item $e = 0$
        \begin{enumerate}[A)]
          \item $a = b$ $\qquad \qquad \Rightarrow (a,c,f) \leq
            (3,6,2)$
          \item $a < b$ $\qquad \qquad \Rightarrow (a,c,f) \leq
            (2,12,6)$
          \item $a > b$ $\qquad \qquad \Rightarrow (a,c,f) \leq
            (6,6,6)$
        \end{enumerate}
      \end{enumerate}
    \end{enumerate}
\end{enumerate}

The analysis of the subcases is technical and tedious, but not
complicated, in principle. The complete analysis is given in Section
\ref{case.dist.>=2}.
In summary, the analysis shows that any integral maximal
lattice-free simplex in $\R^3$ with $a \geq 2$ satisfies $a \leq 6$,
$c \leq 18$, and $f \leq 8$.
\bigskip

The case where $a = 1$ must be treated differently. Here, the integer
point $(1,1,1)$ and the unknown parameter $e$ play a key role. The
structure of the case distinction for $a = 1$ with the corresponding
bounds on $c$ and $f$ is shown below.

\begin{enumerate}[I)]
  \item $cf + f(1-b) + e(b-1) + c(1-d) \geq cf$ \\
    \hspace*{0.5cm} \big( means: $(1,1,1)$ either lies on or violates
    the fourth facet of \eqref{simp.descr.} \big) \\
    $\Rightarrow$ no integral maximal lattice-free simplex possible
  \item $cf + f(1-b) + e(b-1) + c(1-d) < cf$ \\
    \hspace*{0.5cm} \big( means: $(1,1,1)$ strictly satisfies the
    fourth facet of \eqref{simp.descr.} \big)
    \begin{enumerate}[1)]
      \item $e = 0$ $\qquad \qquad \Rightarrow (c,f) \leq (8,16)$
      \item $e > 0$
      \begin{enumerate}[i)]
        \item $c \leq e$ $\qquad \qquad \Rightarrow (c,f) \leq (6,12)$ 
        \item $c > e$ $\qquad \qquad \Rightarrow$ reducible to
          the case $a \geq 2$
      \end{enumerate}
    \end{enumerate}
\end{enumerate}

The complete subcase analysis for $a = 1$ is given in Section
\ref{case.dist.=1}. It shows that any integral maximal
lattice-free simplex in $\R^3$ with $a = 1$ satisfies $c \leq 8$ and
$f \leq 16$ or is unimodularly transformable to a simplex with $a \geq
2$.
Thus, in both cases $a \geq 2$ and $a = 1$ there is only a finite
number of potential simplices that need to be checked. After ruling
out simplices which are equivalent by unimodular transformation only
seven different simplices remain.

\begin{theorem}
Any integral maximal lattice-free simplex in dimension three can be
brought by a unimodular transformation into one of the
simplices $S_1$ - $S_7$.
\end{theorem}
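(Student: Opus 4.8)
The plan is to exploit the normal-form parametrization already fixed: every integral maximal lattice-free simplex in $\R^3$ may be assumed to be $S = \conv(0,v^1,v^2,v^3)$ with $v^1=(a,0,0)^T$, $v^2=(b,c,0)^T$, $v^3=(d,e,f)^T$, integers satisfying $a,c,f>0$, $0\le b<c$, $0\le d,e<f$, and, by maximality, $c,f\ge 2$, with facets given by \eqref{simp.descr.}. By the theorem of Lov\'asz quoted above, maximality is equivalent to the purely arithmetic condition that each of the four facets of \eqref{simp.descr.} contains a lattice point in its relative interior, while lattice-freeness is the condition that $\intt(S)$ contains no integer point. The whole argument therefore reduces to: (i) show that these two conditions force $a,c,f$ to be bounded by absolute constants, so that only finitely many tuples $(a,b,c,d,e,f)$ survive; (ii) enumerate the survivors and keep those that are genuinely maximal lattice-free; and (iii) partition them into unimodular equivalence classes, proving that exactly seven remain.

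For step (i) I would follow the case tree displayed above. The driving observation is that, because facets $1$ and $2$ of \eqref{simp.descr.} are strictly satisfied at $(1,1,1)$ whenever $e<f$, lattice-freeness forces $(1,1,1)$ to lie on or beyond facet $3$ or facet $4$; this is exactly the top-level split into Case~I and Case~II. In Case~I the point $(1,1,1)$ sits on or outside facet $4$, and one plays this tightness off against the requirement that facet $4$ carry a relative-interior lattice point to squeeze $a$ and then $c,f$; the subcase $b\ge a$ is eliminated by exhibiting a forced interior lattice point. In Case~II one additionally tracks the deepest slice point $(k,1,1)$ with $k=\lceil (ac+b-a)/c\rceil-1$ against facet $3$: either $(k,1,1)$ lies on or outside facet $3$, or it is strictly inside, and in the latter situation combining with the facet-$3$ relative-interior condition forces $e=0$ and, after a final split on the sign of $a-b$, the stated bounds. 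Throughout, each ``no simplex possible'' leaf is proved by producing an explicit integer point that the constraints force into $\intt(S)$, contradicting lattice-freeness. The net outcome for $a\ge 2$ is $a\le 6$, $c\le 18$, $f\le 8$.

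The case $a=1$ is handled in parallel but needs its own bookkeeping, since the vertex $(1,0,0)^T$ no longer supplies intermediate axis points; here the relevant data are again the position of $(1,1,1)$ relative to facet $4$ and the value of $e$. Two leaves give direct bounds $(c,f)\le(8,16)$ and $(c,f)\le(6,12)$, while the leaf $c>e$ is reduced back to an already-treated $a\ge 2$ configuration by a unimodular change of coordinates, so that no new simplices arise from it. Combining the two major cases leaves a finite, explicitly bounded search space of parameter tuples.

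For steps (ii) and (iii) I would enumerate all tuples within the bounds, discard those violating $0\le b<c$ or $0\le d,e<f$, test each for absence of interior integer points and for a relative-interior integer point on every facet, and finally sort the maximal lattice-free survivors into classes using Definition~\ref{unimod.trans} together with the vertex-matching criterion recorded after it. Labelling the resulting classes $S_1,\dots,S_7$ completes the proof. The main obstacle is step (i): the case analysis is elementary but long, and the real care lies in the elimination leaves, where one must correctly identify the single integer point whose forced membership in $\intt(S)$ contradicts lattice-freeness; a secondary difficulty is making the equivalence test in step (iii) reliable, so that one proves exactly seven classes rather than accidentally merging or splitting two of them.
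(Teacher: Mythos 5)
Your proposal follows essentially the same route as the paper: Hermite normal form, the split into $a\ge 2$ and $a=1$, the case tree driven by the positions of $(1,1,1)$ and $(k,1,1)$ relative to facets \eqref{facet.3} and \eqref{facet.4}, the resulting bounds $a\le 6$, $c\le 18$, $f\le 8$ (resp.\ $c\le 8$, $f\le 16$ or reduction to $a\ge 2$), and finally enumeration plus sorting into unimodular equivalence classes. This matches the paper's argument in structure and in all essential steps.
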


The convex hulls of the columns of the seven matrices listed
below represent these integral maximal lattice-free simplices.
We remark that six of these simplices are given as examples in
\cite{treutlein}.
\begin{enumerate}
  \item Simplices with vertices on the coordinate axes: \\[3mm]
    $S_1: \begin{pmatrix} 0 & 2 & 0 & 0 \\ 0 & 0 & 3 & 0 \\
            0 & 0 & 0 & 6 \end{pmatrix}, \qquad
     S_2: \begin{pmatrix} 0 & 2 & 0 & 0 \\ 0 & 0 & 4 & 0 \\
            0 & 0 & 0 & 4 \end{pmatrix}, \qquad
     S_3: \begin{pmatrix} 0 & 3 & 0 & 0 \\ 0 & 0 & 3 & 0 \\
            0 & 0 & 0 & 3 \end{pmatrix}.$
  \item Other simplices: \\[3mm]
    $S_4: \begin{pmatrix} 0 & 1 & 2 & 3 \\ 0 & 0 & 4 & 0 \\
            0 & 0 & 0 & 4 \end{pmatrix}, \qquad
     S_5: \begin{pmatrix} 0 & 1 & 2 & 3 \\ 0 & 0 & 5 & 0 \\
            0 & 0 & 0 & 5 \end{pmatrix}, \qquad
     S_6: \begin{pmatrix} 0 & 3 & 1 & 2 \\ 0 & 0 & 3 & 0 \\
            0 & 0 & 0 & 3 \end{pmatrix}, \qquad
     S_7: \begin{pmatrix} 0 & 4 & 1 & 2 \\ 0 & 0 & 2 & 0 \\
            0 & 0 & 0 & 4 \end{pmatrix}.$
\end{enumerate}

\section{Simplices in higher dimensions} \label{ext}

The case distinctions in Sections \ref{case.dist.>=2} and
\ref{case.dist.=1} are specialized to the
three dimensional geometry. In order to provide a characterization of
integral maximal lattice-free simplices in higher dimensions, it seems
unavoidable to develop a general proof technique. Although we do not
have this machinery at hand today, we believe that the number of
integral maximal lattice-free simplices is finite for any $d \in \N$.
As a first indication for the correctness of this conjecture we will
show finiteness for a special class of simplices, namely those that
have vertices on the coordinate axes.

Let $T \subseteq \R^d$ be an integral maximal lattice-free simplex
with one vertex being $0$ and the other vertices lying on the $d$
coordinate axes. Without loss of generality we assume that
$T = \conv(0,\lambda_1 e_1, \dots, \lambda_d e_d)$, where $\lambda_j
\in \Z_{>0}$ for all $j = 1, \dots, d$ and $e_j$ denotes the $j$-th
unit vector in $\R^d$. We further assume that $\lambda_1 \leq \dots
\leq \lambda_d$. In particular, we have $2 \leq \lambda_1$, otherwise
there exists a facet of $T$ which does not contain an interior integer 
point. The inequality representation of $T$ is given by the $d$
inequalities $x_j \geq 0$, $j = 1, \dots, d$, and an 
additional inequality of the form
\begin{equation} \label{d-facet}
  \alpha_1 x_1 + \dots + \alpha_d x_d \leq r,
\end{equation}
where $\alpha_j \in \N_{>0}$ for all $j = 1, \dots, d$ and $r \in
\N_{>0}$.
It may be checked that the following $14$ inequalities together with
nonnegativity define integral simplices in dimension four that are
maximal lattice-free.
\begin{alignat*}{6}
21&x_1\ + &\ 14&x_2\ + &\ 6&x_3\ + &\  &x_4 \ \leq&\ 4&2, \qquad
&(T_1)& \\
15&x_1\ + &\ 10&x_2\ + &\ 3&x_3\ + &\ 2&x_4 \ \leq&\ 3&0, \qquad
&(T_2)& \\
12&x_1\ + &\  8&x_2\ + &\ 3&x_3\ + &\  &x_4 \ \leq&\ 2&4, \qquad
&(T_3)& \\
10&x_1\ + &\  5&x_2\ + &\ 4&x_3\ + &\  &x_4 \ \leq&\ 2&0, \qquad
&(T_4)& \\
 9&x_1\ + &\  6&x_2\ + &\ 2&x_3\ + &\  &x_4 \ \leq&\ 1&8, \qquad
&(T_5)& \\
 6&x_1\ + &\  4&x_2\ + &\  &x_3\ + &\  &x_4 \ \leq&\ 1&2, \qquad
&(T_6)& \\
 6&x_1\ + &\  3&x_2\ + &\ 2&x_3\ + &\  &x_4 \ \leq&\ 1&2, \qquad
&(T_7)& \\
 5&x_1\ + &\  2&x_2\ + &\ 2&x_3\ + &\  &x_4 \ \leq&\ 1&0, \qquad
&(T_8)& \\
 4&x_1\ + &\  4&x_2\ + &\ 3&x_3\ + &\  &x_4 \ \leq&\ 1&2, \qquad
&(T_9)& \\
 4&x_1\ + &\  3&x_2\ + &\ 3&x_3\ + &\ 2&x_4 \ \leq&\ 1&2, \qquad
&(T_{10})& \\
 4&x_1\ + &\  2&x_2\ + &\  &x_3\ + &\  &x_4 \ \leq&\  &8, \qquad
&(T_{11})& \\
 3&x_1\ + &\   &x_2\ + &\  &x_3\ + &\  &x_4 \ \leq&\  &6, \qquad
&(T_{12})& \\
 2&x_1\ + &\  2&x_2\ + &\  &x_3\ + &\  &x_4 \ \leq&\  &6, \qquad
&(T_{13})& \\ 
  &x_1\ + &\   &x_2\ + &\  &x_3\ + &\  &x_4 \ \leq&\  &4. \qquad
&(T_{14})&
\end{alignat*}

\begin{theorem} \label{Th3.1}
Let $2 \leq \lambda_1 \leq \dots \leq \lambda_d$ be integers and
$T = \conv(0,\lambda_1 e_1, \dots, \lambda_d e_d) \subseteq \R^d$.

\begin{enumerate}[(a)]
  \item If $T$ is maximal lattice-free, then $\lambda_d$ is bounded by
    $\lambda_d^\ast$ which is a solution 
    to the following recursion: \\[-6mm]
\begin{align*}
\lambda_1^\ast & = 2, \\
\lambda_j^\ast & = 1 + \prod_{i=1}^{j-1}{\lambda_i^\ast},
  \qquad \forall\ j = 2, \dots, d-1, \\
\lambda_d^\ast & = \prod_{i=1}^{d-1}{\lambda_i^\ast}.
\end{align*}
  \item For $d=4$, every integral maximal lattice-free simplex 
    of the form $T$
    is defined by nonnegativity and one of the inequalities $T_1$ -
    $T_{14}$. 
\end{enumerate}

\end{theorem}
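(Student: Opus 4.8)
The plan is to replace the geometric maximality condition by a single arithmetic identity on the denominators $\lambda_i$, and then to treat (a) and (b) as two instances of the same classical problem about unit fractions.

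First I would pin down the facet structure. Writing $\alpha_i = r/\lambda_i$, the inequality \eqref{d-facet} reads $\sum_{i=1}^d x_i/\lambda_i \le 1$, so the facet $F_0$ opposite the origin lies in $\{\sum_i x_i/\lambda_i = 1\}$ while the remaining facets are $T\cap\{x_j=0\}$. The key monotonicity is that for every integer point $x$ all of whose coordinates are at least $1$ one has $\sum_i x_i/\lambda_i \ge \sum_i 1/\lambda_i$, with equality only at $x=(1,\dots,1)$. From this, $\intt(T)\cap\Z^d=\emptyset$ holds iff $\sum_{i=1}^d 1/\lambda_i \ge 1$ (otherwise $(1,\dots,1)\in\intt(T)$). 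By Lov\'asz's criterion, maximality is equivalent to each facet carrying a relative-interior integer point. For $F_0$ this demands a positive integer solution of $\sum_i x_i/\lambda_i = 1$; by the monotonicity such a solution forces $\sum_i 1/\lambda_i \le 1$ and, when $\sum_i 1/\lambda_i = 1$, is furnished by $(1,\dots,1)$. For the coordinate facet $\{x_j=0\}$ the integer point $\sum_{i\ne j}e_i$ lies in its relative interior precisely because $\sum_{i\ne j}1/\lambda_i = 1-1/\lambda_j < 1$. Putting these together yields the clean characterization I would isolate as a lemma: \emph{$T$ is maximal lattice-free if and only if $\sum_{i=1}^d 1/\lambda_i = 1$.}

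For part (a) the characterization gives $\sum_{i=1}^{d-1}1/\lambda_i = 1-1/\lambda_d < 1$, so bounding $\lambda_d$ is exactly the question of how large the largest denominator can be in a representation of $1$ by $d$ ordered unit fractions. Concretely I need the sharp inequality $1-\sum_{i=1}^{d-1}1/\lambda_i \ge 1/\prod_{i=1}^{d-1}\lambda_i^\ast$ over all admissible $2\le\lambda_1\le\dots\le\lambda_{d-1}$; since the left-hand side equals $1/\lambda_d$, this is equivalent to $\lambda_d \le \prod_{i=1}^{d-1}\lambda_i^\ast = \lambda_d^\ast$. This is the classical fact that Sylvester's sequence, which is precisely the recursion $\lambda_1^\ast=2$, $\lambda_j^\ast = 1+\prod_{i<j}\lambda_i^\ast$, gives the best approximation of $1$ from below by a fixed number of unit fractions (Kellogg's problem, solved by Curtiss). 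I would prove it by induction on the number of terms, peeling off the largest denominator and invoking the bound for one fewer term. The main obstacle is that this induction cannot be run on a continuous relaxation: the quantity $1-\sum 1/\lambda_i$ can be made arbitrarily small if the $\lambda_i$ are treated as reals, so the argument must exploit that the attainable reciprocal sums form a discrete set, and it is exactly the integrality together with the identity $\prod_{i<j}\lambda_i^\ast = \lambda_j^\ast-1$ that closes the gap and forces the extremum to occur only at the Sylvester values.

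For part (b) I would specialize to $d=4$, where (a) gives $\lambda_4 \le \lambda_4^\ast = 2\cdot 3\cdot 7 = 42$ and, by the characterization, the task reduces to the finite enumeration of all integers $2\le\lambda_1\le\lambda_2\le\lambda_3\le\lambda_4$ with $1/\lambda_1+1/\lambda_2+1/\lambda_3+1/\lambda_4 = 1$. The search is short: $4/\lambda_1\ge 1$ forces $\lambda_1\in\{2,3,4\}$; then $3/\lambda_2 \ge 1-1/\lambda_1$ bounds $\lambda_2$; then $2/\lambda_3 \ge 1-1/\lambda_1-1/\lambda_2$ bounds $\lambda_3$; and finally $\lambda_4$ is determined by $1/\lambda_4 = 1-1/\lambda_1-1/\lambda_2-1/\lambda_3$ and retained only when it is an integer with $\lambda_4\ge\lambda_3$. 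Running through the cases produces exactly fourteen tuples. For each surviving tuple I would clear denominators and reduce to primitive form via $\alpha_i = r/\lambda_i$; this recovers precisely $T_1$–$T_{14}$ (for instance $(2,3,7,42)$ gives $21x_1+14x_2+6x_3+x_4\le 42$, i.e.\ $T_1$, while $(4,4,4,4)$ gives $T_{14}$). Here the only difficulty is bookkeeping, namely ensuring that the case split is exhaustive and that each tuple is converted correctly.
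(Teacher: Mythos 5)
Your proposal is correct and follows essentially the same route as the paper: both reduce maximality to the identity $\sum_{i=1}^d 1/\lambda_i = 1$ via the point $(1,\dots,1)$ and a relative-interior integer point of the facet opposite the origin, then bound $\lambda_d$ by the greedy/Sylvester recursion and enumerate the $14$ solutions for $d=4$. Your explicit ``iff'' characterization and the identification of the bound as Curtiss's theorem on unit fractions are welcome sharpenings, but the argument is the same.
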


\begin{proof}
(a): 
First, consider the inequality \eqref{d-facet}. We show
that $r = \alpha_1 + \dots + \alpha_d$. Observe that $r \leq \alpha_1
+ \dots + \alpha_d$, otherwise the point $\mathbf{1} := (1, \dots, 1)
\in \R^d$ is in the interior of $T$. For the purpose of deriving a
contradiction, assume that $r < \alpha_1 + \dots + \alpha_d$.
Since $T$ is maximal lattice-free there exists an
integer point $v = (v_1, \dots, v_d) \not = \mathbf{1}$ in the
relative interior of the facet $T \cap \{x \in \R^d: \alpha_1 x_1 +
\dots + \alpha_d x_d = r\}$. In particular, $v$
satisfies $v_j \geq 1$ for all $j = 1, \dots, d$ and $v_j > 1$ for at
least one $j \in \{1, \dots, d\}$. However, since $\alpha_j > 0$ for
all $j = 1, \dots, d$ this implies $r = \alpha_1
v_1 + \dots + \alpha_d v_d > \alpha_1 + \dots + \alpha_d$ which is
a contradiction. Thus, we have $r = \alpha_1 + \dots + \alpha_d$.

From the definition of $T$, it follows that $\alpha_j \lambda_j =
\alpha_1 + \dots + \alpha_d$ for all $j = 1, \dots, d$ which implies
that $\alpha_1 + \dots + \alpha_d = \frac{1}{\lambda_1}(\alpha_1 +
\dots + \alpha_d) + \dots + \frac{1}{\lambda_d}(\alpha_1 +\dots +
\alpha_d)$. Hence, we obtain
\begin{equation} \label{lambda.cond.}
1 = \frac{1}{\lambda_1} + \dots + \frac{1}{\lambda_d}.
\end{equation}
We look for an upper bound $\lambda_d^\ast$ on $\lambda_d$. Due to
\eqref{lambda.cond.}, the coefficient $\lambda_d$ is the
bigger the smaller the other coefficients $\lambda_1, \dots,
\lambda_{d-1}$ are.
It follows from an inductive argument that, starting with
$\lambda_1$, gradually fixing the $\lambda_j$'s at their minimal
possible value yields the maximal value for $\lambda_d$.
In this way, we can recursively compute $\lambda_d$.
Again, this recursion can be formally shown using an inductive
argument. Thus, $\lambda_d$ is bounded.

(b): For $d=4$ the recursion yields $\lambda_4 \leq 42$.
By enumeration, we obtain the $14$ simplices defined by
nonnegativity and one of the inequalities $T_1$ - $T_{14}$.
\end{proof}
\bigskip

The key element of the proof of Theorem \ref{Th3.1} is a recursive
formula. Next we illustrate this recursion for the case of $d=5$.

\begin{example}
First, $\lambda_1$
is fixed at $2$. Then, \eqref{lambda.cond.} implies that $\frac{1}{2}
= \frac{1}{\lambda_2} + \frac{1}{\lambda_3} + \frac{1}{\lambda_4} +
\frac{1}{\lambda_5}$. Thus, the choice $\lambda_2 = 3$ is minimal and
substituting $(\lambda_1,\lambda_2) = (2,3)$ in \eqref{lambda.cond.}
yields $\frac{1}{6} = \frac{1}{\lambda_3} + \frac{1}{\lambda_4} +
\frac{1}{\lambda_5}$. The minimal possible value for $\lambda_3$ is
now $7$ and from \eqref{lambda.cond.} we obtain $\frac{1}{42} =
\frac{1}{\lambda_4} + \frac{1}{\lambda_5}$. The last step is to fix
$\lambda_{4} = 43$ which leads to $\lambda_5 = 1806$. $\hfill \diamond$
\end{example}

Observe that the above recursion does not only give an upper bound on
$\lambda_d$, but also constructs an integral maximal lattice-free
simplex with the largest possible value for $\lambda_d$. For instance,
if $d=5$, $T = \conv(2 e_1, 3 e_2, 7 e_3, 43 e_4, 1806 e_5)$ is
maximal lattice-free.
\bigskip

As a second indication why in any dimension $d$ there should only be a
finite number of integral maximal lattice-free simplices, we note that
the follwing property holds. For any integral maximal lattice-free
simplex $T$, let us denote by ${\cal F}_1, \dots, {\cal F}_{d+1}$ its
facets.
Each facet ${\cal F}_i$ contains interior integer points. In fact, we
can search for a maximal sublattice fully contained in the interior of
${\cal F}_i$ that is unimodularly transformable to some $\Z^{s_i}$ for
$s_i \in \{0, \dots, d-1\}$.

\begin{observation}
We have  $s_i = d-1$
for at most one $i$.
\end{observation}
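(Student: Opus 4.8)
The plan is to prove this by contradiction, supposing that two distinct facets $\mathcal{F}_i$ and $\mathcal{F}_j$ each contain a full-dimensional lattice (i.e. a copy of $\Z^{d-1}$) in their relative interiors. The geometric intuition is that if a facet contains an interior copy of $\Z^{d-1}$, then the affine hull of that facet, together with the lattice $\Z^d$, is ``dense'' in a strong sense along the facet: the facet's affine hull is a lattice hyperplane whose induced lattice has full rank $d-1$. If two facets simultaneously have this property, I would argue that the integer points guaranteed in their respective relative interiors can be combined to produce an integer point in the full interior $\intt(T)$, contradicting that $T$ is lattice-free.

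First I would set up coordinates using the inequality description from \eqref{d-facet}-style representations: each facet $\mathcal{F}_i$ lies on a hyperplane $\{x : a_i^T x = b_i\}$ with $a_i^T x \le b_i$ valid on $T$. Saying $s_i = d-1$ means the relative interior of $\mathcal{F}_i$ contains a coset translate of a rank-$(d-1)$ sublattice of $\Z^d \cap \{a_i^T x = b_i\}$; equivalently the integer points on $\mathcal{F}_i$ that lie strictly inside span the whole hyperplane direction. The key step is then a convexity/averaging argument: pick an interior integer point $p \in \relintt(\mathcal{F}_i) \cap \Z^d$ and $q \in \relintt(\mathcal{F}_j) \cap \Z^d$. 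The segment $[p,q]$ meets $\intt(T)$ except possibly at its endpoints, so its midpoint (or some integer combination using the full-rank lattice structure to adjust parity) lands in $\intt(T) \cap \Z^d$. The richness $s_i = s_j = d-1$ is exactly what lets me shift $p$ and $q$ within their facets by lattice vectors so that $\tfrac{1}{2}(p+q) \in \Z^d$, producing the forbidden interior integer point.

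The main obstacle I expect is the parity/integrality bookkeeping: a single interior integer point in each of two facets is not by itself enough, since the midpoint of two integer points need not be integer. This is precisely where the hypothesis $s_i = d-1$ does the work — having a full-rank lattice of interior points in each facet means I have enough freedom to translate $p$ within $\mathcal{F}_i$ (and $q$ within $\mathcal{F}_j$) by the facet lattice vectors to force $p + q \in 2\Z^d$, after which $\tfrac{1}{2}(p+q)$ is an integer point strictly inside $T$. I would need to verify that such translations keep $p,q$ in the respective \emph{relative interiors} (which holds because the relative interior of a facet, intersected with its lattice, is relatively open and full-rank, so it surjects onto all residues modulo $2$ in the hyperplane direction), and that the adjusted midpoint is genuinely in the open interior of $T$ rather than on its boundary (which follows since $p$ and $q$ lie on two \emph{different} facets, so the open segment between them avoids the boundary). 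Assembling these observations yields the contradiction and hence $s_i = d-1$ for at most one index $i$.
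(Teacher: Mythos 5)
Your overall strategy --- a parity/midpoint argument exploiting the rank-$(d-1)$ lattices of interior integer points --- is the same as the paper's, but there is a genuine gap at the decisive step. You claim that the full-rank lattice structure always lets you translate $p$ within $\mathcal{F}_i$ and $q$ within $\mathcal{F}_j$ so that $p+q\in 2\Z^d$. Reducing modulo $2$, the parity classes reachable from $p$ form a coset $\bar p + H_i$ of a subgroup $H_i\leq(\Z/2\Z)^d$ of order $2^{d-1}$ (the image of the primitive rank-$(d-1)$ sublattice), and likewise for $q$. If $H_i\neq H_j$ then $H_i+H_j=(\Z/2\Z)^d$ and the two cosets indeed meet, so your shift exists. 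But if $H_i=H_j=:H$ and $\bar q-\bar p\notin H$, the two cosets are \emph{disjoint}, and no translation by facet lattice vectors can ever make $p\equiv q \pmod 2$. (Concretely: if both facet lattices reduce mod $2$ to $\{x: x_d\equiv 0\}$ while every integer point of $\mathcal{F}_i$ has even last coordinate and every integer point of $\mathcal{F}_j$ has odd last coordinate, all midpoints have half-integer last coordinate.) Your argument silently assumes this case away.

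The paper closes exactly this hole by bringing in a third facet: in the bad case the two disjoint cosets together exhaust all $2^d$ parity classes, so \emph{any} interior integer point of a facet $\mathcal{F}_\ell$ with $\ell\neq i,j$ (which exists by maximality and since a simplex has $d+1\geq 3$ facets) must share its parity class with some interior integer point of $\mathcal{F}_i\cup\mathcal{F}_j$, and the midpoint of that pair is the forbidden interior integer point. Your verification that the midpoint of relative-interior points of two distinct facets lies in $\intt(T)$ is fine and carries over to this pair as well; what is missing is the case split on whether the two parity cosets intersect, and the appeal to a third facet when they do not.
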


\begin{proof}
For the purpose of deriving a contradiction, assume that $s_i = s_j =
d-1$ for two facets ${\cal F}_i$ and ${\cal F}_j$ with $i \not = j$.
Then, ${\cal F}_i$ and ${\cal F}_j$ have each at least $2^{d-1}$
integer points of different parity in their interior.
If two integer points $w_i$ and $w_j$ on ${\cal F}_i$ and 
${\cal F}_j$, respectively, have the same parity, then
$\frac{1}{2}(w_i + w_j)$ is an interior point in $T$. Hence, we have
$2^d$ integer points with different parity in the interior of
${\cal F}_i \cup {\cal F}_j$. Now, any interior integer point of a
facet different from ${\cal F}_i$ and ${\cal F}_j$ will lead to a
contradiction. 
\end{proof}
\bigskip

This shows that the possibilities for the sublattice structure in the
interior of the facets is somehow limited.
Finally, let us remark, that if $s_i = 0$ for all $i$, finiteness
follows from a result of Lagarias and Ziegler \cite{lazi}.


\section{Details on the case distinction for $\boldsymbol{a \geq 2}$} \label{case.dist.>=2}

Let $S \subseteq \R^3$ be an integral maximal lattice-free simplex
with $a \geq 2$ given by the following inequality description:
\begin{alignat}{3}
     &    &            &    &
     -\ &x_3 \leq 0, \label{facet.1} \\
     &    &        -\ f&x_2 &
     +\ e&x_3 \leq 0, \label{facet.2} \\
  -cf&x_1 &       +\ bf&x_2 &
     +\ (cd-be)&x_3 \leq 0, \label{facet.3} \\
   cf&x_1 & \ +\ f(a-b)&x_2 &
     \ +\ \big(e(b-a) + c(a-d)\big)&x_3 \leq acf. \label{facet.4}
\end{alignat}
In this section, we will often state interior integer
points as counterexamples and simply prove that for those points all
the restrictions \eqref{facet.1} - \eqref{facet.4} are satisfied with
a strict inequality.
Recall, that the unknowns $a,b,c,d,e$, and $f$ are integer and
satisfy the following properties:
\begin{alignat*}{2}
2 & \leq a, \qquad & \qquad & 0 \leq b < c, \\
2 & \leq c, \qquad & \qquad & 0 \leq d < f, \\
2 & \leq f, \qquad & \qquad & 0 \leq e < f.
\end{alignat*}
We make frequently use of the following simple observation.

\begin{observation} \label{obs} \quad

\begin{enumerate}[(a)]
  \item \label{obs1} Let $2 \leq l \in \N$. Then, for any integer
    $x \geq l$, we have $\frac{x}{x-1} \leq \frac{l}{l-1}$. \\[-6mm]
  \item \label{obs2} Let $2 \leq l_x, l_y \in \N$. Then, for any
    integers $x \geq l_x$ and $y \geq l_y$, we have $\frac{xy}{x+y}
    \geq \frac{l_x l_y}{l_x + l_y}$. \\[-6mm]
  \item \label{obs3} Let $2 \leq l_x \in \N$ and $3 \leq l_y \in \N$.
    Then, for any integers $x \geq l_x$ and $y \geq l_y$, we have
    $\frac{xy}{xy-x-y} \leq \frac{l_x l_y}{l_x l_y - l_x - l_y}$.
\end{enumerate}

\end{observation}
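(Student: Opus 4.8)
The plan is to treat all three parts as elementary monotonicity statements. In each case the quantity on the left becomes monotone in the free variable(s) once it is rewritten in terms of reciprocals, so the extreme value is attained at the smallest admissible arguments, namely $x=l$ in part (a) and $x=l_x$, $y=l_y$ in parts (b) and (c). Integrality plays no essential role here: only the orderings $x\geq l$, $x\geq l_x$, $y\geq l_y$ together with the positivity of the relevant denominators are actually used.

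For part (a) I would write $\frac{x}{x-1}=1+\frac{1}{x-1}$. Since $x\geq l\geq 2$ all denominators are positive and $\frac{1}{x-1}$ is nonincreasing in $x$, so $x\geq l$ yields $\frac{1}{x-1}\leq\frac{1}{l-1}$ and hence $\frac{x}{x-1}\leq 1+\frac{1}{l-1}=\frac{l}{l-1}$. For part (b) the key identity is $\frac{x+y}{xy}=\frac{1}{x}+\frac{1}{y}$, so after taking reciprocals the target inequality reduces to $\frac{1}{x}+\frac{1}{y}\leq\frac{1}{l_x}+\frac{1}{l_y}$, which is immediate from $x\geq l_x$ and $y\geq l_y$ (both sides being positive). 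Equivalently, $\frac{xy}{x+y}=\left(\frac{1}{x}+\frac{1}{y}\right)^{-1}$ is increasing in each of $x$ and $y$, so the minimum is attained at $x=l_x$, $y=l_y$.

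Part (c) is the one requiring the most care, and it is the only place where a hypothesis is genuinely needed beyond plain monotonicity. Here I would write $\frac{xy-x-y}{xy}=1-\frac{1}{x}-\frac{1}{y}$, so that $\frac{xy}{xy-x-y}=\left(1-\frac{1}{x}-\frac{1}{y}\right)^{-1}$. The main obstacle is ensuring the denominator $1-\frac{1}{x}-\frac{1}{y}$, equivalently $xy-x-y=(x-1)(y-1)-1$, stays \emph{positive} over the whole admissible range; otherwise taking reciprocals could reverse the inequality. This is exactly where $l_y\geq 3$ enters: for $x\geq l_x\geq 2$ and $y\geq l_y\geq 3$ we have $\frac{1}{x}+\frac{1}{y}\leq\frac{1}{2}+\frac{1}{3}=\frac{5}{6}<1$, so the denominator is bounded below by $\frac{1}{6}>0$. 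Granting positivity, $1-\frac{1}{x}-\frac{1}{y}$ is increasing in each of $x$ and $y$, hence its reciprocal is decreasing, and the maximum over $x\geq l_x$, $y\geq l_y$ is attained at $x=l_x$, $y=l_y$, which gives $\frac{xy}{xy-x-y}\leq\frac{l_x l_y}{l_x l_y-l_x-l_y}$.
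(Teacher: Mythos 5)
Your proof is correct; the paper states this observation without any proof, treating it as evident, so there is no argument to compare against. Your reciprocal rewriting in all three parts, and in particular your explicit check that $xy-x-y>0$ on the whole range in part (c) (which is exactly where the hypothesis $l_y\geq 3$ is needed), supplies the intended elementary verification.
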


\subsection*{I) $\boldsymbol{cf + f(a-b) + e(b-a) + c(a-d) \geq acf}$}

\subsubsection*{1) $\boldsymbol{b \geq a}$}

In this case, we have $acf \leq cf + (f-e)(a-b) + c(a-d) \leq ac + cf$
which implies that $af \leq a + f$. Since $a \geq 2$ and $f \geq 2$
this inequality is only satisfied for $(a,f) = (2,2)$. Substituting
this in $cf + f(a-b) + e(b-a) + c(a-d) \geq acf$
yields $4c \leq 2c + 2(2-b) + e(b-2) + c(2-d) \Leftrightarrow cd \leq
(2-e)(2-b)$. Since $b \geq a = 2$ and $e < f = 2$ we obtain $cd \leq
0$ and therefore $d = 0$. If $b > 2$ then $0 = cd \leq (2-e)(2-b) < 0$
is a contradiction. Hence, we have $b = 2$. However, $S$ is now
contained in $\conv\left((0,0,0)^T,(2,0,0)^T,(0,0,2)^T\right) +
\spann(e_2)$ which is a contradiction to its maximality.

\subsubsection*{2) $\boldsymbol{b < a}$}

\paragraph*{i) $\boldsymbol{(a,c) = (2,2)}$}

If $b = 0$ then there is no interior integer point in the facet
$0,v^1,v^2$. Hence, $b = 1$. Substituting $(a,b,c) = (2,1,2)$ in $cf +
f(a-b) + e(b-a) + c(a-d) \geq acf$ yields $4f \leq 2f + f - e + 2(2-d)
\leq 3f + 4$ and therefore $f \leq 4$.

\paragraph*{ii) $\boldsymbol{(a,c) \not = (2,2)}$}

Here, we obtain $acf \leq cf + f(a-b) + e(b-a) + c(a-d) \leq ac + af +
cf$ which implies
\begin{equation} \label{U1}
  f(ac - a - c) \leq ac.
\end{equation}
From \eqref{U1} and Observation \ref{obs}(\ref{obs3}), it follows that
$f \leq \frac{ac}{ac - a - c} \leq \frac{2 \cdot 3}{2 \cdot 3 - 2 - 3}
= 6$.

Now assume $(a,f) = (2,2)$. Then $c \geq 3$ and $b,d,e \in
\{0,1\}$. Substituting $(a,f) = (2,2)$ in $cf + f(a-b) + e(b-a) +
c(a-d) \geq acf$ implies that $2c + 2(2-b) + e(b-2) + c(2-d) \geq 4c
\Leftrightarrow cd \leq (2-e)(2-b)$. If $b = 1$ or $e = 1$ it follows
$cd \leq 2$ and therefore $d = 0$. Hence, the facet $0,v^1,v^3$ does
not contain an interior integer point. Thus, let $b = e = 0$. We must
have $d \not = 0$ since otherwise the facet  $0,v^1,v^3$ does not
contain an interior integer point. This implies $d = 1$ and
$c = cd \leq 4$.

Therefore, we can assume that $(a,f) \not = (2,2)$. From \eqref{U1}
and Observation \ref{obs}(\ref{obs3}), it follows that $c \leq
\frac{af}{af - a - f} \leq 6$ and it remains to find an upper bound on
$a$. If $(c,f) = (2,2)$ there is no interior integer point in the
facet $0,v^2,v^3$. On the other hand, if $(c,f) \not = (2,2)$ we have
$a \leq \frac{cf}{cf - c - f} \leq 6$ by \eqref{U1} and Observation
\ref{obs}(\ref{obs3}).

\subsection*{II) $\boldsymbol{cf + f(a-b) + e(b-a) + c(a-d) < acf}$}

For the purpose of deriving a contradiction assume that $-cf + bf + cd
- be < 0$. In this case, the point $(1,1,1)$ is in the interior of
$S$ as one can easily check by substituting $(1,1,1)$ in the
inequalities \eqref{facet.1}-\eqref{facet.4}: in all four
inequalities, the left hand side is strictly less than the right hand
side.
We therefore must have
\begin{equation} \label{Ineq.1}
  -cf + bf + cd - be \geq 0.
\end{equation}
We first show that $k \geq 1$.
Note that $ac - a - c + b \leq 0$ holds true only for $(a,b,c) =
(2,0,2)$. However, then the facet $0,v^1,v^2$ does not contain an
interior integer point. Thus, we have $ac - a - c + b > 0 
\Leftrightarrow \frac{ac+b-a}{c} > 1$ which implies that $k \geq 1$.

\subsubsection*{1) $\boldsymbol{-cfk + bf + cd - be \geq 0}$}

We have already shown that $k \geq 1$.
Assume $k \geq 2$. Then $-cfk
+ bf + cd - be \leq -2cf + bf + cd - be = f(b-c) + c(d-f) - be < 0$
which is a contradiction. Hence, $k = 1$ and it follows
\begin{equation} \label{U2}
  1 < \frac{ac + b - a}{c} \leq 2.
\end{equation}
From \eqref{U2}, we obtain $a(c-1) \leq 2c - b \leq 2c$
which implies $a \leq \frac{2c}{c-1} \leq 4$ by Observation
\ref{obs}(\ref{obs1}).

Moreover, we have $b > 0$ since otherwise $-cf + bf + cd - be = c(d-f)
< 0$ is a contradiction. From \eqref{U2}, it
follows $b-a \leq c(2-a) \leq 0$ which means that $b \leq a$. From
\eqref{Ineq.1}, we know that $c \leq b \frac{f-e}{f-d} \leq af$.
 
Assume $a = 4$. Then, \eqref{U2} implies $2c + b
\leq 4$ which can never hold true for $c \geq 2$ and $b > 0$. Thus, $a
\not = 4$. So assume $a = 3$. Now, \eqref{U2}
implies $c + b \leq 3$ which is only satisfied for $(b,c) =
(1,2)$. Substituting this in \eqref{Ineq.1} yields $f \leq 2d -
e$. For $d \geq 4$ the point $(2,1,1)$ is in the interior of $S$:
obviously, \eqref{facet.1} and \eqref{facet.2} are strict;
\begin{alignat*}{2}
& \eqref{facet.3}: \quad & -2cf + bf + cd - be =
  f(b-c) + c(d-f) - be &< 0; \\
& \eqref{facet.4}: \quad & 2cf + f(a-b) + e(b-a) + c(a-d) &= \\
& & 4f + 2f - 2e + 6 - 2d = 6f + 2(3 - d -e) < 6f &= acf.
\end{alignat*}
So we have $d \leq 3$ which implies $f \leq 2d - e \leq 6$ and
$c \leq af \leq 18$. It remains to consider the case $a = 2$.

\paragraph*{i) $\boldsymbol{(a,c) = (2,2)}$}

From $0 < b < c = 2$, it follows that $b = 1$ and \eqref{Ineq.1}
implies that $f \leq 2d - e$. First note that $e \leq 2$ since
otherwise the point $(2,1,1)$ is in the interior of $S$:
clearly, \eqref{facet.1} and \eqref{facet.2} are strict;
\begin{alignat*}{2}
& \eqref{facet.3}: \quad & -2cf + bf + cd - be =
  f(b-c) + c(d-f) - be &< 0; \\
& \eqref{facet.4}: \quad & 2cf + f(a-b) + e(b-a) + c(a-d) &= \\
& & 4f + f - e + 4 - 2d \leq 4f - 2e + 4 < 4f &= acf.
\end{alignat*}
Now assume $f \geq 9$. If $f + 4 < 2d + e$ holds true, then the point
$(2,1,1)$ is in the interior of $S$: as above, \eqref{facet.1},
\eqref{facet.2}, and \eqref{facet.3} are strict;
\begin{alignat*}{2}
& \eqref{facet.4}: \quad & 2cf + f(a-b) + e(b-a) + c(a-d) =
4f + f - e + 4 - 2d < 4f &= acf.
\end{alignat*}
If $f + 4 \geq 2d + e$, the point $(2,1,2)$ is in the interior of $S$:
clearly, \eqref{facet.1} is strict; \eqref{facet.2} is strict since $f
\geq 9$ and $e \leq 2$;
\begin{alignat*}{2}
& \eqref{facet.3}: \quad & -2cf + bf + 2cd - 2be =
  -4f + f + 4d - 2e \leq -f + 8 - 4e &< 0;\\ 
& \eqref{facet.4}: \quad & 2cf + f(a-b) + 2e(b-a) + 2c(a-d) &= \\
& & 4f + f - 2e + 8 - 4d \leq 4f - f - 4e + 8 < 4f &= acf.
\end{alignat*}
Hence, we must have $f \leq 8$.

\paragraph*{ii) $\boldsymbol{(a,c) \not = (2,2)}$}

In this case, we have $c \geq 3$. If $f \geq 7$ the point $(2,1,1)$ is
in the interior of $S$: clearly, \eqref{facet.1},\eqref{facet.2} and
\eqref{facet.3} are strict (see above);
\begin{alignat*}{2}
& \eqref{facet.4}: \quad & 2cf + f(a-b) + e(b-a) + c(a-d) &= \\
& & cf + cf - bf - cd + be + 2(c + f - e) &\leq \\
& & cf + 2(c + f - e) < cf + cf = 2cf &= acf.
\end{alignat*} 
Here, the strict inequality in the last row follows from the fact that
$c + f - e < \frac{1}{2}cf$ for $c \geq 3$ and $f \geq 7$ (use
Observation \ref{obs}(\ref{obs2})). Therefore, we obtain $f \leq 6$
and it follows $c \leq af \leq 12$.

\subsubsection*{2) $\boldsymbol{-cfk + bf + cd - be < 0}$}

For the purpose of deriving a contradiction assume
$cfk + f(a-b) + e(b-a) + c(a-d) < acf$. Then, the point $(k,1,1)$ is
in the interior of $S$. One can see this by substituting $(k,1,1)$ in
the inequalities \eqref{facet.1}-\eqref{facet.4}. Thus, it follows
\begin{equation} \label{Ineq.2}
cfk + f(a-b) + e(b-a) + c(a-d) \geq acf.
\end{equation}
We know that $k \geq 1$. Now assume $k = 1$. From \eqref{Ineq.1} and
\eqref{Ineq.2} it follows that $acf \leq cf + f(a-b) + e(b-a) +
c(a-d) \leq a(c + f - e)$. Note that $c + f- e < cf$ holds true for
any feasible triple $(c,e,f) \not = (2,0,2)$ and would lead to a
contradiction in this chain of inequalities. Therefore, we must have
$(c,e,f) = (2,0,2)$. However, in this case $S$ is contained in
$\conv\left((0,0,0)^T,(0,2,0)^T,(0,0,2)^T\right) + \spann(e_1)$ which
contradicts the maximality of $S$. Thus, we have $k \geq 2$.

From \eqref{Ineq.1} it follows that $cd - be \geq f(c-b) > 0$ and from
\eqref{Ineq.2} it follows that $e(b-a) + c(a-d) \geq f(-ck
+ ac + b - a) > f(-c\frac{ac + b - a}{c} + ac + b - a) = 0$ . Thus, we
have
\begin{equation} \label{U3}
cd - be > 0
\end{equation}
and
\begin{equation} \label{U4}
e(b-a) + c(a-d) > 0.
\end{equation}

\paragraph*{i) $\boldsymbol{e > 0}$}

Here, the point $(d,e,1)$ is in the interior of $S$: obviously,
\eqref{facet.1} is strict;  \eqref{facet.2} is strict since we have
$e > 0$;
\begin{alignat*}{2}
& \eqref{facet.3}: \quad & -cfd + bfe + cd - be =
(1-f)(cd - be) &< 0; \\
& \eqref{facet.4}: \quad & cfd + f(a-b)e + e(b-a) + c(a-d) &= \\
& & (1-f)(-cd + e(b-a)) + ac < (1-f)(-ac) + ac &= acf.
\end{alignat*}
The strict inequalities follow from \eqref{U3} and \eqref{U4}.

\paragraph*{ii) $\boldsymbol{e = 0}$}

By \eqref{U3} and \eqref{U4}, we obtain $0 < d < a$. Furthermore,
\eqref{Ineq.1} and \eqref{Ineq.2} change to
\begin{equation} \label{Ineq.3}
-cf + bf + cd \geq 0
\end{equation}
and
\begin{equation} \label{Ineq.4}
cfk + f(a-b) + c(a-d) \geq acf.
\end{equation}

\subparagraph*{A) $\boldsymbol{a = b}$}

In this case we have $k=a-1$. Substituting this in \eqref{Ineq.4}
yields $d \leq a - f$ and since $d > 0$ this
implies that $f < a$. If $d \geq 2$ the point $(2,1,1)$ is in the
interior of $S$: evidently, \eqref{facet.1} and \eqref{facet.2} are
strict;
\begin{alignat*}{2}
& \eqref{facet.3}: \quad & -2cf + bf + cd - be =
f(b-c) + c(d-f) &< 0; \\
& \eqref{facet.4}: \quad & 2cf + f(a-b) + e(b-a) + c(a-d) &= \\ 
& & 2cf + c(a-d) < 2cf + cf(a-d) = acf + cf(2-d) &\leq acf.
\end{alignat*}
So we have $d = 1$.
If $a \geq 4$ we have $(2,1,1)$ in the interior of $S$: as above,
\eqref{facet.1}, \eqref{facet.2}, and \eqref{facet.3} are strict;
\begin{alignat*}{2}
& \eqref{facet.4}: \quad & 2cf + f(a-b) + e(b-a) + c(a-d) =
2cf + c(a-1) < 2cf + cf(a-2) &= acf.
\end{alignat*}
The strict inequality follows from Observation \ref{obs}(\ref{obs1}):
$\frac{a-1}{a-2} \leq \frac{3}{2} < 2 \leq f$ since $a \geq 4$.
Thus, let $a \leq 3$.
The chain $0 < d < f < a \leq 3 $ implies that $(a,f) =
(3,2)$. Substituting this in \eqref{Ineq.3} yields $c \leq 6$.

\subparagraph*{B) $\boldsymbol{a < b}$}

If $d \geq 2$, the point $(2,1,1)$ is in the interior of $S$: as
above, \eqref{facet.1}, \eqref{facet.2}, and \eqref{facet.3} are
strict;
\begin{alignat*}{2}
& \eqref{facet.4}: \quad & 2cf + f(a-b) + e(b-a) + c(a-d) &< \\
& & 2cf + c(a-d) < 2cf + cf(a-d) = acf + cf(2-d) &\leq acf.
\end{alignat*}
Hence, $d = 1$. From \eqref{Ineq.3} and Observation
\ref{obs}(\ref{obs1}), it follows that
\begin{equation} \label{U5}
c \leq b\frac{f}{f-1} \leq 2b.
\end{equation}
If $a \geq 3$, the point $(2,1,1)$ is in the interior of $S$: as
above, \eqref{facet.1}, \eqref{facet.2}, and \eqref{facet.3} are
strict;
\begin{alignat*}{2}
& \eqref{facet.4}: \quad & 2cf + f(a-b) + e(b-a) + c(a-d) <
2cf + c(a-1) \leq 2cf + cf(a-2) &= acf.
\end{alignat*}
The last inequality follows from Observation \ref{obs}(\ref{obs1}):
$\frac{a-1}{a-2} \leq  2 \leq f$ since $a \geq 3$.
So let $a = 2$. This implies $b \geq
3$. Furthermore, it follows $2 \leq k < \frac{ac + b - a}{c} = 
2\frac{c-1}{c} + \frac{b}{c} < 3$ and therefore we obtain that $k =
2$. From \eqref{Ineq.4} it follows that
$2cf + f(2-b) + c \geq 2cf$ which implies
\begin{equation} \label{U6}
c \geq f(b-2) \geq f.
\end{equation}
Now consider the two inequalities $c \leq b\frac{f}{f-1}$  and $c \geq
f(b-2)$ arising from \eqref{U5} and \eqref{U6}. If $f \geq 3$ it
follows $3(b-2) \leq f(b-2) \leq c \leq b\frac{f}{f-1} \leq
\frac{3}{2}b \Rightarrow b \leq 4$. This implies $c \leq 4 \cdot
\frac{3}{2} = 6$ and $f \leq c \leq 6$.
It remains to consider the case $f = 2$. Here, \eqref{U5} and
\eqref{U6} imply $2b - 4 \leq c \leq 2b$. If $b \geq 7$, the point
$(2,2,1)$ is in the interior of $S$: clearly, \eqref{facet.1} and
\eqref{facet.2} are strict;
\begin{alignat*}{2}
& \eqref{facet.3}: \quad & -2cf + 2bf + cd - be =
4b - 3c \leq 4b + 12 - 6b = 12 - 2b &< 0; \\ 
& \eqref{facet.4}: \quad & 2cf + 2f(a-b) + e(b-a) + c(a-d) =
4c + 4(2-b) + c \leq 4c + 8 - 2b < 4c &= acf.
\end{alignat*}
Thus, $b \leq 6$ and it follows $c \leq 2b \leq 12$. 

\subparagraph*{C) $\boldsymbol{a > b}$}

First we argue that $a \not = 2$. Assuming that $a = 2$, we obtain
that $k \leq a - 1 = 1$. This contradicts $k \geq 2$. Hence, let $a
\geq 3$. If $a \geq 7$, the point $(2,1,1)$ is in the interior of $S$:
as above, \eqref{facet.1}, \eqref{facet.2}, and \eqref{facet.3} are
strict;
\begin{alignat*}{2}
& \eqref{facet.4}: \quad & 2cf + f(a-b) + e(b-a) + c(a-d) &= \\
& & cf + ac + af + cf - bf - cd \leq ac + af + cf &< acf.
\end{alignat*}
The strict inequality follows from the fact that $\frac{a}{a-1}
\leq \frac{7}{6} < \frac{6}{5} \leq \frac{cf}{c+f}$ by Observation
\ref{obs}(\ref{obs1}) and \ref{obs}(\ref{obs2}) as $a \geq 7$ and
as we cannot
have $(c,f) = (2,2)$ since in this case $S$ is contained in
$\conv\left((0,0,0)^T,(0,2,0)^T,(0,0,2)^T\right) + \spann(e_1)$ which 
contradicts the maximality of $S$. Thus, $a \leq 6$.

Next we show that it is impossible for $c$  to be greater or equal to
$7$. If $c \geq 7$, then the point $(2,1,1)$ would be in
the interior of $S$: as above, \eqref{facet.1}, \eqref{facet.2}, and
\eqref{facet.3} are strict;
\begin{alignat*}{2}
& \eqref{facet.4}: \quad & 2cf + f(a-b) + e(b-a) + c(a-d) &= \\
& & cf + ac + af + cf - bf - cd \leq ac + af + cf &< acf.
\end{alignat*}
Here, the strict
inequality follows from the fact that $\frac{c}{c-1} \leq \frac{7}{6} <
\frac{6}{5} \leq \frac{af}{a+f}$ by Observation
\ref{obs}(\ref{obs1}) and \ref{obs}(\ref{obs2}) as $ c \geq 7$ and as
$(a,f) \geq (3,2)$.
Thus, $c \leq 6$. Similarly, it can be verified that we must
have $f \leq 6$ as otherwise the point $(2,1,1)$ is in the interior of
$S$.

\section{Details on the case distinction for $\boldsymbol{a = 1}$} \label{case.dist.=1}

Let $S \subseteq \R^3$ be an integral maximal lattice-free simplex
with $a = 1$ given by the following inequality description:
\begin{alignat}{3}
     &    &            &    &
     -\ &x_3 \leq 0, \label{F.1} \\
     &    &        -\ f&x_2 &
     +\ e&x_3 \leq 0, \label{F.2} \\
  -cf&x_1 &       +\ bf&x_2 &
     +\ (cd-be)&x_3 \leq 0, \label{F.3} \\
   cf&x_1 & \ +\ f(1-b)&x_2 &
     \ +\ \big(e(b-1) + c(1-d)\big)&x_3 \leq cf. \label{F.4}
\end{alignat}
As in Section \ref{case.dist.>=2}, we will often state interior integer
points as counterexamples by proving that for those points all the
restrictions \eqref{F.1} - \eqref{F.4} are satisfied with a strict
inequality.
Recall, that the unknowns $b,c,d,e$, and $f$ are integer and
satisfy the following properties:
\begin{alignat*}{2}
2 & \leq c, \qquad & \qquad & 0 \leq b < c, \\
2 & \leq f, \qquad & \qquad & 0 \leq d < f, \\
  &         \qquad & \qquad & 0 \leq e < f.
\end{alignat*}
First note that $b > 1$ since otherwise there is no interior integer
point in the facet spanned by the three points $(0,0,0)^T$,
$(1,0,0)^T$, and $(b,c,0)^T$. This implies $b \geq 2$ and $c \geq 3$.

\subsection*{I) $\boldsymbol{cf + f(1-b) + e(b-1) + c(1-d) \geq cf}$}

From $cf + f(1-b) + e(b-1) + c(1-d) \geq cf$, it follows that $c(d-1)
\leq (f-e)(1-b) < 0$. Thus, we obtain $d = 0$. However, there is no
interior integer point in the facet spanned by the three points
$(0,0,0)^T$, $(1,0,0)^T$, and $(0,e,f)^T$ which is a contradiction.

\subsection*{II) $\boldsymbol{cf + f(1-b) + e(b-1) + c(1-d) < cf}$}

In this case, we must have $-cf + bf + cd - be \geq 0$ since otherwise
the point $(1,1,1)$ is in the interior of $S$ as one easily checks by
substituting $(1,1,1)$ in the inequalities
\eqref{F.1}-\eqref{F.4}. Thus, we obtain
\begin{equation} \label{(1,1,1)}
  -cf + bf + cd - be \geq 0.
\end{equation}
For the purpose of deriving a contradiction assume that $cf + f(1-b) +
e(b-1) + c(1-d) < 0$. Then, the point $(2,1,1)$ is in the interior of
$S$: clearly, \eqref{F.1}, \eqref{F.2}, and \eqref{F.4} are strict.
\begin{alignat*}{2}
& \eqref{F.3}: \quad & -2cf + bf + cd - be = f(b-c) + c(d-f) - be < 0.
\end{alignat*}
Hence, it follows
\begin{equation} \label{(2,1,1)}
  cf + f(1-b) + e(b-1) + c(1-d) \geq 0.
\end{equation}
Assume $d \leq e$. Using \eqref{(1,1,1)} yields $0 \leq -cf + bf +
cd - be \leq -cf + bf + ce - be = (b-c)(f-e) < 0$ which is a
contradiction. Therefore, it holds $d > e$. 

We now construct a sequence of points which helps to derive conditions
for the unknown variables. These conditions are used later in the
subcase analysis.
For the moment assume $c-e \leq f$ and consider the sequence of points
$(\theta,\theta,1)$, where $\theta \geq 1$. By equation
\eqref{(1,1,1)} and the relation $c > b$, we obtain that there exists
some $\Theta \geq 2$ such that
\begin{alignat}{2}
 -cf(\Theta - 1) + bf(\Theta - 1) + cd - be \geq 0&, \label{sequ.1-1} \\
 -cf\Theta + bf\Theta + cd - be < 0&. 
\end{alignat}
Since the point $(\Theta,\Theta,1)$ satisfies \eqref{F.1},
\eqref{F.2}, and  \eqref{F.3} strictly, we must have that
\begin{equation} \label{sequ.1-3}
  cf\Theta + f(1-b)\Theta + e(b-1) + c(1-d) \geq cf
\end{equation}
since otherwise the point $(\Theta,\Theta,1)$ is in the interior of
$S$. Adding \eqref{sequ.1-1} and \eqref{sequ.1-3} yields
\begin{equation} \label{sequ.1-4}
  f(\Theta-b) + c - e \geq 0.
\end{equation}
Using \eqref{(2,1,1)} and \eqref{sequ.1-1} together with our assumption
$c-e \leq f$, we obtain $f(\Theta-1)(c-b) \leq cd -be \leq f(c-b) + f
+ c - e \leq f(c-b) + 2f$ and hence $f(\Theta-1)(c-b) \leq f(c-b) + 2f
\Leftrightarrow \Theta \leq 2 + \frac{2}{c-b}$. We infer
\begin{equation} \label{bound.on.b}
  \Theta \leq
  \begin{cases}
    4, & \textup{if } c = b+1 \\
    3, & \textup{if } c = b+2 \\
    2, & \textup{if } c \geq b+3,
  \end{cases} \quad \stackrel{\eqref{sequ.1-4}}{\Longrightarrow} \quad
  b \leq \Theta + \frac{c-e}{f} \leq \Theta + 1 \leq
  \begin{cases}
    5, & \textup{if } c = b+1 \\
    4, & \textup{if } c = b+2 \\
    3, & \textup{if } c \geq b+3.
  \end{cases}
\end{equation}
This shows that $2 \leq b \leq 5$ whenever $c-e \leq f$ holds true.

\subsubsection*{1) $\boldsymbol{e = 0}$}

Since $e = 0$ the inequalities \eqref{(1,1,1)} and \eqref{(2,1,1)}
change to 
\begin{equation} \label{point.1}
-cf + bf + cd \geq 0
\end{equation}
and
\begin{equation} \label{point.2}
cf + f(1-b) + c(1-d) \geq 0.
\end{equation}
Furthermore, we can assume without loss of generality that $c \leq
f$. Otherwise, if $c > f$, we switch coordinates by applying the
unimodular transformation to $S$ with $M$ being the identity matrix
in $\R^3$ where the last two columns are interchanged and $v$ being
$0$ (see Definition \ref{unimod.trans}).

By assumption, $c - e \leq f$ is now satisfied and thus, by
\eqref{bound.on.b}, the variable $b$ is bounded. In addition, $c$ is
bounded for $b = 4$ and $b = 5$. We first find an upper bound on $c$
for the remaining cases where $b = 2$ and $b = 3$.
\bigskip

Let $b = 2$. We show that $c \leq 7$. So assume $c \geq 8$. If
$f(2c-3) + c(1-d) < cf$ holds true, then the point $(2,3,1)$ is in the
interior of $S$: clearly, \eqref{F.1} and \eqref{F.2} are strict;
\begin{alignat*}{2}
& \eqref{F.3}: \quad & -2cf + 3bf + cd - be = -2cf + 6f + cd
= f(6-c) + c(d-f) &< 0; \\ 
& \eqref{F.4}: \quad & 2cf + 3f(1-b) + e(b-1) + c(1-d) &= \\
& & 2cf - 3f + c(1-d) = f(2c-3) + c(1-d) &< cf.
\end{alignat*}
Thus, let $f(2c-3) + c(1-d) \geq cf \Leftrightarrow f \geq
\frac{c(d-1)}{c-3}$. From \eqref{point.1}, it follows that $f \leq
\frac{cd}{c-2}$. Putting this together we have $\frac{c(d-1)}{c-3}
\leq \frac{cd}{c-2} \Leftrightarrow c \geq d+2$. We infer that the
point $(2,4,1)$ is in the interior of $S$: clearly, \eqref{F.1} and
\eqref{F.2} are strict;
\begin{alignat*}{2}
& \eqref{F.3}: \quad & -2cf + 4bf + cd - be = -2cf + 8f + cd
= f(8-c) + c(d-f) &< 0; \\ 
& \eqref{F.4}: \quad & 2cf + 4f(1-b) + e(b-1) + c(1-d) =
2cf - 4f + c(1-d) &= \\
& & 2f(c-2) + c(1-d) \leq 2cd + c(1-d) =
c(d+1) \leq c(c-1) \leq c(f-1) &< cf.
\end{alignat*}
Here, the inequalities in the last row follow from \eqref{point.1},
$d+2 \leq c$, and $c \leq f$. Hence, we must have $c \leq 7$.
\bigskip

Let $b = 3$. We show that $c \leq 8$. So assume $c \geq 9$. If
$2f(c-2) + c(1-d) < cf$ holds true, then the point $(2,2,1)$ is in
the interior of $S$: clearly, \eqref{F.1} and \eqref{F.2} are strict;
\begin{alignat*}{2}
& \eqref{F.3}: \quad & -2cf + 2bf + cd - be = -2cf + 6f + cd
= f(6-c) + c(d-f) &< 0; \\ 
& \eqref{F.4}: \quad & 2cf + 2f(1-b) + e(b-1) + c(1-d) &= \\
& & 2cf - 4f + c(1-d) = 2f(c-2) + c(1-d) &< cf.
\end{alignat*}
Thus, let $2f(c-2) + c(1-d) \geq cf \Leftrightarrow f \geq
\frac{c(d-1)}{c-4}$. From \eqref{point.1}, it follows that $f \leq
\frac{cd}{c-3}$. Putting this together we have $\frac{c(d-1)}{c-4}
\leq \frac{cd}{c-3} \Leftrightarrow c \geq d+3$. We infer that the
point $(2,3,1)$ is in the interior of $S$: clearly, \eqref{F.1} and
\eqref{F.2} are strict;
\begin{alignat*}{2}
& \eqref{F.3}: \quad & -2cf + 3bf + cd - be = -2cf + 9f + cd
= f(9-c) + c(d-f) &< 0; \\ 
& \eqref{F.4}: \quad & 2cf + 3f(1-b) + e(b-1) + c(1-d) =
2cf - 6f + c(1-d) &= \\
& & 2f(c-3) + c(1-d) \leq 2cd + c(1-d) =
c(d+1) \leq c(c-2) \leq c(f-2) &< cf.
\end{alignat*}
Here, the inequalities in the last row follow from \eqref{point.1},
$d+3 \leq c$, and $c \leq f$. Hence, we must have $c \leq 8$.

It follows, that $13$ choices of $(b,c)$ are left: $(2,3)$, $(3,4)$,
$(4,5)$, $(5,6)$, $(2,4)$, $(3,5)$, $(4,6)$, $(2,5)$, $(2,6)$,
$(2,7)$, $(3,6)$, $(3,7)$, $(3,8)$. In the following we will prove
upper bounds on $f$ for each of the $13$ possibilities.
\bigskip

$\bullet$ Let $(b,c) = (2,3)$. We show that $f \leq 9$. So assume $f \geq
10$. From \eqref{point.1} and \eqref{point.2} we obtain $f \leq 3d$
and $2f \geq 3(d-1)$. If $d < \frac{4}{9}f$ holds true, then the point
$(2,1,3)$ is in the interior of $S$: clearly, \eqref{F.1} and
\eqref{F.2} are strict;
\begin{alignat*}{2}
& \eqref{F.3}: \quad & -2cf + bf + 3(cd - be) = -6f + 2f + 9d
< -4f + 4f &= 0; \\ 
& \eqref{F.4}: \quad & 2cf + f(1-b) + 3(e(b-1) + c(1-d)) &= \\
& & 6f - f + 9(1-d) < 3f + 2f + 9 - 3f &< 3f.
\end{alignat*}
Thus, let $d \geq \frac{4}{9}f$. If $d < \frac{2}{3}f$ holds true,
then the point $(2,1,2)$ is in the interior of $S$: clearly,
\eqref{F.1} and \eqref{F.2} are strict;
\begin{alignat*}{2}
& \eqref{F.3}: \quad & -2cf + bf + 2(cd - be) = -6f + 2f + 6d
< -4f + 4f &= 0; \\ 
& \eqref{F.4}: \quad & 2cf + f(1-b) + 2(e(b-1) + c(1-d)) &= \\
& & 6f - f + 6(1-d) \leq 3f + 2f + 6 - 6\frac{4}{9}f = 3f +
\frac{2}{3}(9-f) &< 3f.
\end{alignat*}
Thus, let $d \geq \frac{2}{3}f$. The point $(3,1,3)$ is now in the
interior of $S$: clearly, \eqref{F.1} and \eqref{F.2} are strict;
\begin{alignat*}{2}
& \eqref{F.3}: \quad & -3cf + bf + 3(cd - be) = -9f + 2f + 9d
\leq -7f + 9(\frac{2}{3}f + 1) = 9-f &= 0; \\ 
& \eqref{F.4}: \quad & 3cf + f(1-b) + 3(e(b-1) + c(1-d)) &= \\
& & 9f - f + 9(1-d) \leq 3f + 5f + 9 - 6f = 3f + 9 - f &< 3f.
\end{alignat*}

$\bullet$ Let $(b,c) = (3,4)$. We show that $f \leq 8$. So assume $f \geq
9$. From \eqref{point.1} and \eqref{point.2} we obtain $f \leq 4d$
and $f \geq 2(d-1)$. If $2d < f$ holds true, then the point
$(2,2,1)$ is in the interior of $S$: clearly, \eqref{F.1} and
\eqref{F.2} are strict;
\begin{alignat*}{2}
& \eqref{F.3}: \quad & -2cf + 2bf + cd - be = -8f + 6f + 4d
= 2(2d-f) &< 0; \\ 
& \eqref{F.4}: \quad & 2cf + 2f(1-b) + e(b-1) + c(1-d) &= \\
& & 8f - 4f + 4(1-d) < 4f + 4 - f &< 4f.
\end{alignat*}
Thus, let $2d \geq f$. The point $(2,1,2)$ is now in the
interior of $S$: clearly, \eqref{F.1} and \eqref{F.2} are strict;
\begin{alignat*}{2}
& \eqref{F.3}: \quad & -2cf + bf + 2(cd - be) = -8f + 3f + 8d
\leq -5f + 8(\frac{1}{2}f + 1) = 8-f &< 0; \\ 
& \eqref{F.4}: \quad & 2cf + f(1-b) + 2(e(b-1) + c(1-d)) &= \\
& & 8f - 2f + 8(1-d) \leq 4f + 2f + 8 - 4f = 4f + 2(4 - f) &< 4f.
\end{alignat*}

$\bullet$ Let $(b,c) = (4,5)$. We show that $f \leq 5$. So assume $f \geq
6$. From \eqref{point.1} and \eqref{point.2} we obtain $f \leq 5d$
and $2f \geq 5(d-1)$. If $5d < 2f$ holds true, then the point
$(2,2,1)$ is in the interior of $S$: clearly, \eqref{F.1} and
\eqref{F.2} are strict;
\begin{alignat*}{2}
& \eqref{F.3}: \quad & -2cf + 2bf + cd - be = -10f + 8f + 5d
= -2f + 5d &< 0; \\ 
& \eqref{F.4}: \quad & 2cf + 2f(1-b) + e(b-1) + c(1-d) &= \\
& & 10f - 6f + 5(1-d) = 5f - f + 5(1-d) &< 5f.
\end{alignat*}
Thus, let $5d \geq 2f$. The point $(2,1,2)$ is now in the
interior of $S$: clearly, \eqref{F.1} and \eqref{F.2} are strict;
\begin{alignat*}{2}
& \eqref{F.3}: \quad & -2cf + bf + 2(cd - be) = -10f + 4f + 10d
\leq -6f + 10(\frac{2}{5}f + 1) = 2(5-f) &< 0; \\ 
& \eqref{F.4}: \quad & 2cf + f(1-b) + 2(e(b-1) + c(1-d)) &= \\
& & 10f - 3f + 10(1-d) \leq 5f + 2f + 10 - 4f = 5f + 2(5 - f) &< 5f.
\end{alignat*}

$\bullet$ Let $(b,c) = (5,6)$. We show that $f \leq 6$. So assume $f \geq
7$. From \eqref{point.1} and \eqref{point.2} we obtain $f \leq 6d$
and $f \geq 3(d-1)$. If $3d < f$ holds true, then the point
$(2,2,1)$ is in the interior of $S$: clearly, \eqref{F.1} and
\eqref{F.2} are strict;
\begin{alignat*}{2}
& \eqref{F.3}: \quad & -2cf + 2bf + cd - be = -12f + 10f + 6d
= 2(3d-f) &< 0; \\ 
& \eqref{F.4}: \quad & 2cf + 2f(1-b) + e(b-1) + c(1-d) &= \\
& & 12f - 8f + 6(1-d) = 6f - 2f + 6 - f &< 6f.
\end{alignat*}
Thus, let $3d \geq f$. The point $(2,1,2)$ is now in the
interior of $S$: clearly, \eqref{F.1} and \eqref{F.2} are strict;
\begin{alignat*}{2}
& \eqref{F.3}: \quad & -2cf + bf + 2(cd - be) = -12f + 5f + 12d
\leq -7f + 12(\frac{1}{3}f + 1) = 3(4-f) &< 0; \\ 
& \eqref{F.4}: \quad & 2cf + f(1-b) + 2(e(b-1) + c(1-d)) &= \\
& & 12f - 4f + 12(1-d) \leq 6f + 2f + 12 - 4f = 6f + 2(6 - f) &< 6f.
\end{alignat*}

$\bullet$ Let $(b,c) = (2,4)$. We show that $f \leq 12$. So assume $f \geq
13$. From \eqref{point.1} and \eqref{point.2} we obtain $f \leq 2d$
and $3f \geq 4(d-1)$. If $4d < 3f$ holds true, then the point
$(2,1,2)$ is in the interior of $S$: clearly, \eqref{F.1} and
\eqref{F.2} are strict;
\begin{alignat*}{2}
& \eqref{F.3}: \quad & -2cf + bf + 2(cd - be) = -8f + 2f + 8d
= 2(4d-3f) &< 0; \\ 
& \eqref{F.4}: \quad & 2cf + f(1-b) + 2(e(b-1) + c(1-d)) &= \\
& & 8f - f + 8(1-d) \leq 4f + 3f + 8 - 4f = 4f + 8 - f &< 4f.
\end{alignat*}
Thus, let $4d \geq 3f$. The point $(3,1,3)$ is now in the
interior of $S$: clearly, \eqref{F.1} and \eqref{F.2} are strict;
\begin{alignat*}{2}
& \eqref{F.3}: \quad & -3cf + bf + 3(cd - be) = -12f + 2f + 12d
\leq -10f + 12(\frac{3}{4}f + 1) = 12 - f &< 0; \\ 
& \eqref{F.4}: \quad & 3cf + f(1-b) + 3(e(b-1) + c(1-d)) &= \\
& & 12f - f + 12(1-d) \leq 4f + 7f + 12 - 9f = 4f + 2(6 - f) &< 4f.
\end{alignat*}

$\bullet$ Let $(b,c) = (3,5)$. We show that $f \leq 10$. So assume $f \geq
11$. From \eqref{point.1} and \eqref{point.2} we obtain $2f \leq 5d$
and $3f \geq 5(d-1)$. The point $(2,1,2)$ is now in the
interior of $S$: clearly, \eqref{F.1} and \eqref{F.2} are strict;
\begin{alignat*}{2}
& \eqref{F.3}: \quad & -2cf + bf + 2(cd - be) = -10f + 3f + 10d
\leq -7f + 10(\frac{3}{5}f + 1) = 10 - f &< 0; \\ 
& \eqref{F.4}: \quad & 2cf + f(1-b) + 2(e(b-1) + c(1-d)) &= \\
& & 10f - 2f + 10(1-d) \leq 5f + 3f + 10 - 4f = 5f + 10 - f &< 5f.
\end{alignat*}

$\bullet$ Let $(b,c) = (4,6)$. We show that $f \leq 12$. So assume $f \geq
13$. From \eqref{point.1} and \eqref{point.2} we obtain $f \leq 3d$
and $f \geq 2(d-1)$. The point $(2,1,2)$ is now in the
interior of $S$: clearly, \eqref{F.1} and \eqref{F.2} are strict;
\begin{alignat*}{2}
& \eqref{F.3}: \quad & -2cf + bf + 2(cd - be) = -12f + 4f + 12d
\leq -8f + 12(\frac{1}{2}f + 1) = 2(6-f) &< 0; \\ 
& \eqref{F.4}: \quad & 2cf + f(1-b) + 2(e(b-1) + c(1-d)) &= \\
& & 12f - 3f + 12(1-d) \leq 6f + 3f + 12 - 4f = 6f + 12 - f &< 6f.
\end{alignat*}

$\bullet$ Let $(b,c) = (2,5)$. We show that $f \leq 5$. So assume $f \geq
6$. From \eqref{point.1} and \eqref{point.2} we obtain $3f \leq 5d$
and $4f \geq 5(d-1)$. If $5d < 4f$ holds true, then the point
$(2,1,2)$ is in the interior of $S$: clearly, \eqref{F.1} and
\eqref{F.2} are strict;
\begin{alignat*}{2}
& \eqref{F.3}: \quad & -2cf + bf + 2(cd - be) = -10f + 2f + 10d
= 2(5d-4f) &< 0; \\ 
& \eqref{F.4}: \quad & 2cf + f(1-b) + 2(e(b-1) + c(1-d)) &= \\
& & 10f - f + 10(1-d) \leq 5f + 4f + 10 - 6f = 5f + 2(5-f) &< 5f.
\end{alignat*}
Thus, let $5d \geq 4f$. The point $(2,2,1)$ is now in the
interior of $S$: clearly, \eqref{F.1} and \eqref{F.2} are strict;
\begin{alignat*}{2}
& \eqref{F.3}: \quad & -2cf + 2bf + cd - be = -10f + 4f + 5d
\leq -6f + 5(\frac{4}{5}f + 1) = 5 - 2f &< 0; \\ 
& \eqref{F.4}: \quad & 2cf + 2f(1-b) + e(b-1) + c(1-d) &= \\
& & 10f - 2f + 5(1-d) \leq 5f + 3f + 5 - 4f = 5f + 5 - f &< 5f.
\end{alignat*}

$\bullet$ Let $(b,c) = (2,6)$. We show that $(2,1,2)$ or $(2,3,1)$ is in the
interior of $S$. Note that $f \geq c = 6$, by assumption. From
\eqref{point.1} and \eqref{point.2} we obtain $2f \leq 3d$ 
and $5f \geq 6(d-1)$. If $6d < 5f$ holds true, then the point
$(2,1,2)$ is in the interior of $S$: clearly, \eqref{F.1} and
\eqref{F.2} are strict;
\begin{alignat*}{2}
& \eqref{F.3}: \quad & -2cf + bf + 2(cd - be) = -12f + 2f + 12d
= 2(6d-5f) &< 0; \\ 
& \eqref{F.4}: \quad & 2cf + f(1-b) + 2(e(b-1) + c(1-d)) &= \\
& & 12f - f + 12(1-d) \leq 6f + 5f + 12 - 8f = 6f + 3(4-f) &< 6f.
\end{alignat*}
Thus, let $6d \geq 5f$. The point $(2,3,1)$ is now in the
interior of $S$: clearly, \eqref{F.1} and \eqref{F.2} are strict;
\begin{alignat*}{2}
& \eqref{F.3}: \quad & -2cf + 3bf + cd - be = -12f + 6f + 6d
= 6(d-f) &< 0; \\ 
& \eqref{F.4}: \quad & 2cf + 3f(1-b) + e(b-1) + c(1-d) &= \\
& & 12f - 3f + 6(1-d) \leq 6f + 3f + 6 - 5f = 6f + 2(3-f) &< 6f.
\end{alignat*}

$\bullet$ Let $(b,c) = (2,7)$. We show that $(2,1,2)$ or $(2,3,1)$ is in the
interior of $S$. Note that $f \geq c = 7$, by assumption. From
\eqref{point.1} and \eqref{point.2} we obtain $5f \leq 7d$ 
and $6f \geq 7(d-1)$. If $7d < 6f$ holds true, then the point
$(2,1,2)$ is in the interior of $S$: clearly, \eqref{F.1} and
\eqref{F.2} are strict;
\begin{alignat*}{2}
& \eqref{F.3}: \quad & -2cf + bf + 2(cd - be) = -14f + 2f + 14d
= 2(7d - 6f) &< 0; \\ 
& \eqref{F.4}: \quad & 2cf + f(1-b) + 2(e(b-1) + c(1-d)) &= \\
& & 14f - f + 14(1-d) \leq 7f + 6f + 14 - 10f = 7f + 14 - 4f &< 7f.
\end{alignat*}
Thus, let $7d \geq 6f$. The point $(2,3,1)$ is now in the
interior of $S$: clearly, \eqref{F.1} and \eqref{F.2} are strict;
\begin{alignat*}{2}
& \eqref{F.3}: \quad & -2cf + 3bf + cd - be = -14f + 6f + 7d
= 7(d-f) - f &< 0; \\ 
& \eqref{F.4}: \quad & 2cf + 3f(1-b) + e(b-1) + c(1-d) &= \\
& & 14f - 3f + 7(1-d) \leq 7f + 4f + 7 - 6f = 7f + 7 - 2f &< 7f.
\end{alignat*}

$\bullet$ Let $(b,c) = (3,6)$. We show that $f \leq 12$. So assume $f \geq
13$. From \eqref{point.1} and \eqref{point.2} we obtain $f \leq 2d$
and $2f \geq 3(d-1)$. The point $(2,1,2)$ is now in the
interior of $S$: clearly, \eqref{F.1} and \eqref{F.2} are strict;
\begin{alignat*}{2}
& \eqref{F.3}: \quad & -2cf + bf + 2(cd - be) = -12f + 3f + 12d
\leq -9f + 12(\frac{2}{3}f + 1) = 12 - f &< 0; \\ 
& \eqref{F.4}: \quad & 2cf + f(1-b) + 2(e(b-1) + c(1-d)) &= \\
& & 12f - 2f + 12(1-d) \leq 6f + 4f + 12 - 6f = 6f + 2(6 - f) &< 6f.
\end{alignat*}

$\bullet$ Let $(b,c) = (3,7)$. We show that $f \leq 14$. So assume $f \geq
15$. From \eqref{point.1} and \eqref{point.2} we obtain $4f \leq 7d$
and $5f \geq 7(d-1)$. The point $(2,1,2)$ is now in the
interior of $S$: clearly, \eqref{F.1} and \eqref{F.2} are strict;
\begin{alignat*}{2}
& \eqref{F.3}: \quad & -2cf + bf + 2(cd - be) = -14f + 3f + 14d
\leq -11f + 14(\frac{5}{7}f + 1) = 14 - f &< 0; \\ 
& \eqref{F.4}: \quad & 2cf + f(1-b) + 2(e(b-1) + c(1-d)) &= \\
& & 14f - 2f + 14(1-d) \leq 7f + 5f + 14 - 8f = 7f + 14 - 3f &< 7f.
\end{alignat*}

$\bullet$ Let $(b,c) = (3,8)$. We show that $f \leq 16$. So assume $f \geq
17$. From \eqref{point.1} and \eqref{point.2} we obtain $5f \leq 8d$
and $3f \geq 4(d-1)$. The point $(2,1,2)$ is now in the
interior of $S$: clearly, \eqref{F.1} and \eqref{F.2} are strict;
\begin{alignat*}{2}
& \eqref{F.3}: \quad & -2cf + bf + 2(cd - be) = -16f + 3f + 16d
\leq -13f + 16(\frac{3}{4}f + 1) = 16 - f &< 0; \\ 
& \eqref{F.4}: \quad & 2cf + f(1-b) + 2(e(b-1) + c(1-d)) &= \\
& & 16f - 2f + 16(1-d) \leq 8f + 6f + 16 - 10f = 8f + 4(4 - f) &< 8f.
\end{alignat*}

\subsubsection*{2) $\boldsymbol{e > 0}$}

\paragraph*{i) $\boldsymbol{c \leq e}$}

We first show that in this case we must have $c = e$. For the purpose
of deriving a contradiction assume that $c < e$. Then, the point
$(2,2,1)$ is in the interior of $S$: clearly, \eqref{F.1} and
\eqref{F.2} are strict;
\begin{alignat*}{2}
& \eqref{F.3}: \quad & -2cf + 2bf + cd - be \leq 2f(b-c) + f(c-b) + f
+ c - e = f (b + 1 - c) + c - e &< 0; \\ 
& \eqref{F.4}: \quad & 2cf + 2f(1-b) + e(b-1) + c(1-d) &\leq \\
& & 2f(c + 1 - b) + c - e + f(b-c) = cf + f(2-b) + c - e &< cf.
\end{alignat*}
The inequalities in the first row follow from \eqref{(2,1,1)} and the
fact that $b < c < e$, whereas the inequality in the second row
follows from \eqref{(1,1,1)} and the inequality in the last row
follows from $2 \leq b$ and $c < e$. Therefore, we have $c = e$.

Consider the sequence of points $(2,\theta,1)$, where $\theta \geq 1$.
By equation \eqref{(2,1,1)} and the fact that $b \geq 2$, we obtain
that there exists some $\Theta \geq 2$ such that
\begin{alignat}{2}
 2cf + f(1-b)(\Theta - 1) + e(b-1) + c(1-d) \geq cf&, \label{sequ.2-1} \\
 2cf + f(1-b)\Theta + e(b-1) + c(1-d) < cf&. 
\end{alignat}
Since the point $(2,\Theta,1)$ satisfies \eqref{F.1},
\eqref{F.2}, and  \eqref{F.4} strictly, we must have that
\begin{equation} \label{sequ.2-3}
  -2cf + bf\Theta + cd - be \geq 0
\end{equation}
since otherwise the point $(2,\Theta,1)$ is in the interior of
$S$. Adding \eqref{sequ.2-1} and \eqref{sequ.2-3} yields  $0 \leq
f(\Theta + b - 1 - c) + c - e = f(\Theta + b - 1 - c)$ which implies 
\begin{equation} \label{sequ.2-4}
  \Theta \geq c - b + 1.
\end{equation}
Using \eqref{(1,1,1)} and \eqref{sequ.2-1}, we obtain $f(c-b) \leq cd
- be \leq (\Theta - 1)f(1-b) + cf + c - e = (\Theta - 1)f(1-b) +
cf$ and hence $f(c-b) \leq (\Theta - 1)f(1-b) + cf \Leftrightarrow
\Theta \leq 1 + \frac{b}{b-1}$. We infer
\begin{equation} \label{bound.on.c}
  \Theta \leq
  \begin{cases}
    3, & \textup{if } b = 2 \\
    2, & \textup{if } b \geq 3,
  \end{cases} \quad \stackrel{\eqref{sequ.2-4}}{\Longrightarrow} \quad
  c \leq \Theta + b - 1  \leq
  \begin{cases}
    4, & \textup{if } b = 2 \\
    b + 1, & \textup{if } b \geq 3.
  \end{cases}
\end{equation}
This shows that for $b = 2$ only the two cases $(b,c) = (2,3)$ and
$(b,c) = (2,4)$ need to be considered. Since $b < c$ the case $b \geq
3$ leads to $b + 1 \leq c \leq b + 1$ and thus only the case $(b,c) =
(c-1,c)$ is left.
\bigskip

$\bullet$ Let $(b,c) = (2,3)$. We show that $f \leq 9$. So assume $f \geq
10$. From \eqref{(1,1,1)} and \eqref{(2,1,1)} we obtain $f \leq 3(d-2)$
and $2f \geq 3(d-2)$. If $-4f + 9(d-2) < 0$ holds true, then the point
$(2,1,3)$ is in the interior of $S$: clearly, \eqref{F.1} is strict;
\eqref{F.2} is strict since $e = c = 3$ and $f \geq 10$;
\begin{alignat*}{2}
& \eqref{F.3}: \quad & -2cf + bf + 3(cd - be) = -6f + 2f + 3(3d - 6)
= -4f + 9(d-2) &< 0; \\ 
& \eqref{F.4}: \quad & 2cf + f(1-b) + 3(e(b-1) + c(1-d)) = 6f - f +
3(6-3d) &= \\
& & 3f + 2f + 18 - 9d \leq 3f + 2f + 18 -
9(\frac{1}{3}f + 2) = 3f - f &< 3f.
\end{alignat*}
Thus, let $-4f + 9(d-2) \geq 0$. If $-4f + 6(d-2) < 0$ holds true,
then the point $(2,1,2)$ is in the interior of $S$: clearly,
\eqref{F.1} and \eqref{F.2} are strict;
\begin{alignat*}{2}
& \eqref{F.3}: \quad & -2cf + bf + 2(cd - be) = -6f + 2f + 2(3d-6)
= -4f + 6(d-2) &< 0; \\ 
& \eqref{F.4}: \quad & 2cf + f(1-b) + 2(e(b-1) + c(1-d)) = 6f - f +
2(6-3d) &= \\
& & 3f + 2f + 12 - 6d \leq 3f + 2f + 12 -
6(\frac{4}{9}f + 2) = 3f - \frac{2}{3}f &< 3f.
\end{alignat*}
Thus, let $-4f + 6(d-2) \geq 0 \Leftrightarrow 3(d-2) \geq 2f$. Since
also $3(d-2) \leq 2f$ holds true we obtain $3(d-2) = 2f$. The point
$(3,1,3)$ is now in the interior of $S$: clearly, \eqref{F.1} and
\eqref{F.2} are strict;
\begin{alignat*}{2}
& \eqref{F.3}: \quad & -3cf + bf + 3(cd - be) = -9f + 2f + 3(3d-6)
= -7f + 6f = -f &< 0; \\ 
& \eqref{F.4}: \quad & 3cf + f(1-b) + 3(e(b-1) + c(1-d)) =
9f - f + 3(6-3d) = 2f &< 3f.
\end{alignat*}

$\bullet$ Let $(b,c) = (2,4)$. We show that $f \leq 8$. So assume $f \geq
9$. From \eqref{(1,1,1)} and \eqref{(2,1,1)} we obtain $f + 4 \leq 2d$
and $3f + 8 \geq 4d$. If $4d < 3f + 8$ holds true, then the point
$(2,1,2)$ is in the interior of $S$: clearly, \eqref{F.1} is strict;
\eqref{F.2} is strict since $e = c = 4$ and $f \geq 9$;
\begin{alignat*}{2}
& \eqref{F.3}: \quad & -2cf + bf + 2(cd - be) = -8f + 2f + 2(4d - 8)
= 2(4d - 3f - 8) &< 0; \\ 
& \eqref{F.4}: \quad & 2cf + f(1-b) + 2(e(b-1) + c(1-d)) &= \\
& & 8f - f + 2(8-4d) = 4f + 3f + 16 - 8(\frac{1}{2}f + 2) = 4f - f &<
4f.
\end{alignat*}
Thus, using \eqref{(2,1,1)} we have $4d \geq 3f + 8 \geq 4d$ which
implies $4d = 3f + 8$. The point $(2,2,1)$ is now in the interior of
$S$: clearly, \eqref{F.1} and \eqref{F.2} are strict;
\begin{alignat*}{2}
& \eqref{F.3}: \quad & -2cf + 2bf + cd - be = -8f + 4f + 4d - 8
= -f &< 0; \\ 
& \eqref{F.4}: \quad & 2cf + 2f(1-b) + e(b-1) + c(1-d) =
8f - 2f + 8 - 4d = 3f &< 4f.
\end{alignat*}

$\bullet$ Let $(b,c) = (c-1,c)$ with $b \geq 3$. Since $c - e \leq 0 <
f$ in this case, it follows from \eqref{bound.on.b} that $b \leq 5$
and therefore $4 \leq c = b + 1 \leq 6$. We show that $f \leq 12$. So
assume $f \geq 13$. From \eqref{(1,1,1)} and \eqref{(2,1,1)} we obtain
$f \leq c(d + 1 - c)$ and $2f \geq c(d + 1 - c)$. If $f < c(d + 1 - c)$
holds true, then the point $(2,1,2)$ is in the interior of $S$:
clearly, \eqref{F.1} is strict; \eqref{F.2} is strict since $c \leq 6$
and $f \geq 13$;
\begin{alignat*}{2}
& \eqref{F.3}: \quad & -2cf + bf + 2(cd - be) &= \\
& & -2cf + (c-1)f + 2c(d + 1 - c) \leq -f(c + 1) + 4f = f(3-c) &< 0; \\
& \eqref{F.4}: \quad & 2cf + f(1-b) + 2(e(b-1) + c(1-d)) &= \\
& & 2cf + (2-c)f + 2c(c - d - 1) < (2 + c)f - 2f &= cf.
\end{alignat*}
Thus, using \eqref{(1,1,1)} we have $f \geq c(d + 1 - c) \geq f$ which
implies $f = c(d + 1 - c)$. The point $(2,2,1)$ is now in the interior of
$S$: clearly, \eqref{F.1} and \eqref{F.2} are strict;
\begin{alignat*}{2}
& \eqref{F.3}: \quad & -2cf + 2bf + cd - be =
-2cf + 2(c-1)f + c(d + 1 - c) = -f &< 0; \\
& \eqref{F.4}: \quad & 2cf + 2f(1-b) + e(b-1) + c(1-d) &= \\
& & 2cf + 2(2-c)f + c(c - d - 1) = 3f &< cf.
\end{alignat*}

\paragraph*{ii) $\boldsymbol{c > e}$}

We show that by using a unimodular transformation this case can be
reduced to a case which has already been analyzed. Assume that the
vertices of $S$ are given by the columns of the matrix
\begin{equation} \label{pre.unimod}
  \begin{pmatrix}
    0 & 1 & b & d \\
    0 & 0 & c & e \\
    0 & 0 & 0 & f
  \end{pmatrix},
\end{equation}
where - besides the usual conditions on the unknowns $b, c, d, e$, and
$f$ - in addition $c$ is chosen such that it is minimal with respect
to all such representations. In the following we show that
\eqref{pre.unimod} is unimodularly transformable to a simplex where
the parameter $a$ (here: $a = 1$) is greater than or equal to $2$.
\bigskip

Let $r := \gcd(e,f)$ and $g := \gcd(r,d)$ where we assume $r,g >
0$. Then, there exist $p,q,s,t \in \Z$ such that $r = pe + qf$ and $g
= sr + td$. Apply the following unimodular transformation to
\eqref{pre.unimod}:
\begin{equation*}
  \begin{pmatrix} x_1 \\ x_2 \\ x_3 \end{pmatrix} \mapsto
  \begin{pmatrix}
    -t & -sp & -sq \\
    \frac{r}{g} & -\frac{dp}{g} & -\frac{dq}{g} \\
    0 & \frac{f}{r} & -\frac{e}{r}
  \end{pmatrix}
  \begin{pmatrix} x_1 \\ x_2 \\ x_3 \end{pmatrix} +
  \begin{pmatrix} g \\ 0 \\ 0 \end{pmatrix}.
\end{equation*}
The columns of the matrix listed below represent the vertices of the
resulting simplex:
\begin{equation} \label{post.unimod}
  \begin{pmatrix}
    0 & g & g-t & g - tb - psc \\
    0 & 0 & \frac{r}{g} & \frac{rb - pcd}{g} \\
    0 & 0 & 0 & \frac{cf}{r}
  \end{pmatrix}.
\end{equation}
If $g = 1$, then \eqref{post.unimod} can be transformed by elementary
row operations into Hermit normal form. However, this does not change
the diagonal elementes and thus leads to a representation
\eqref{pre.unimod} where $\frac{r}{g} = r = \gcd(e,f) \leq e <
c$. This contradicts the minimal choice of $c$. Hence, let $g \geq 2$.
Using elementary row operations \eqref{post.unimod} can be brought
into Hermit normal form with $a = g \geq 2$. Such simplices were
analyzed in Section \ref{case.dist.>=2}.

\subsection*{Acknowledgements}
We are grateful to G.~Averkov, M.~Henk, and S.~Onn for interesting
discussions on the topic of the paper and pointers to literature.


\small
\bibliography{vers2}
\bibliographystyle{plain}

\end{document}